 \newtheorem{thm}{Theorem}
 \newtheorem{prop}[thm]{Proposition}
 \newtheorem{lemma}[thm]{Lemma}
 \newtheorem{cor}[thm]{Corollary}
 \theoremstyle{definition}
 \newtheorem{definition}[thm]{Definition}
 \newtheorem{ex}[thm]{Example}
 \theoremstyle{remark}
 \newtheorem{remark}[thm]{Remark}
\numberwithin{thm}{section}
\numberwithin{equation}{section}
\def\Spec{{\rm Spec}\,}
\def\Spf{{\rm Spf}}
\def\rk{{\rm rk}}
\def\defect{{\rm def}}
\def\dom{{\rm dom}}
\def\codim{{\rm codim}}
\def\Hom{{\rm Hom}}
\def\dim{{\rm dim}\,}
\def\defect{{\rm def}}
\def\height{\operatorname{ht}}
\def\N{{N}}
\def\R{{\mathbb R}}
\def\F{{\mathbb F}}
\def\Q{{\mathbb Q}}
\def\O{{\mathcal O}}
\def\cd{cordial}
\def\Cd{Cordial }
\def\sha{small-height-avoiding}
\definecolor{ltgreen}{rgb}{0.0, 0.5, 0.0}
\definecolor{dkgreen}{rgb}{0.0, 0.42, 0.24}
\begin{document}

\begin{title}
{Generic Newton points and the Newton poset in Iwahori-double cosets}
\end{title}
\author{Elizabeth Mili\'{c}evi\'{c}}
\author{Eva Viehmann}

\address{Elizabeth Mili\'{c}evi\'{c}, Department of Mathematics \& Statistics, Haverford College, 370 Lancaster Avenue, Haverford, PA, 19041, USA}
\email{emilicevic@haverford.edu}

\address{Eva Viehmann, Technische Universit\"at M\"unchen\\Fakult\"at f\"ur Mathematik - M11 \\ Boltzmannstr. 3\\85748 Garching bei M\"unchen\\Germany}
\email{viehmann@ma.tum.de}

\date{}

\begin{abstract}{We consider the Newton stratification on Iwahori-double cosets in the loop group of a reductive group. We describe a group-theoretic condition on the generic Newton point, called cordiality, under which the Newton poset (i.e.~the index set for non-empty Newton strata) is saturated and Grothendieck's conjecture on closures of the Newton strata holds. Finally, we give several large classes of Iwahori-double cosets for which this condition is satisfied by studying certain paths in the associated quantum Bruhat graph.}
\end{abstract}

\maketitle

\section{Introduction}\label{intro}

Let $G$ be a reductive group over a finite field $\mathbb F_q$. In their seminal paper \cite{DL}, Deligne and Lusztig described the representations of the finite group $G(\mathbb F_q)$ by realizing them in the cohomology of suitable coverings of a family of varieties indexed by the elements of the finite Weyl group $W$ of $G$. These so-called Deligne-Lusztig varieties are locally closed subschemes of the flag variety associated with $G$.  In addition, they are smooth and have dimension equal to the length $\ell(w)$ of the element $w \in W$.

We consider affine Deligne-Lusztig varieties, which are an analog of the above in affine flag varieties. Now let $G$ be a quasi-split reductive group defined over $F=\F_q(\!(t)\!)$. In contrast to the classical case considered by Deligne and Lusztig, affine Deligne-Lusztig varieties depend on two parameters: an element of the Iwahori-Weyl group $\widetilde W$ of $G$, and an element $b\in G(\breve F)$ where $\breve F=\overline{\F}_q(\!(t)\!)$. To define these varieties, we consider the affine Bruhat decomposition.  For $I$ an Iwahori subgroup of $G$ and $\breve \O = \overline{\F}_q[\![t]\!]$ the ring of integers in $\breve F$, we have  $$G(\breve F)=\coprod_{x\in \widetilde W}I(\breve \O)xI(\breve \O).$$ Let $\sigma$ denote the Frobenius automorphism of $\breve F$ over $F$ mapping all coefficients of the Laurent series to their $q^{\text{th}}$ powers.  The affine Deligne-Lusztig variety associated to $x\in \widetilde W$ and $b\in G(\breve F)$ is defined as the locally closed reduced subscheme $X_x(b)$ of the affine flag variety of $G$ given by $$X_x(b)(\overline {\F}_q)=\{g\in G(\breve F)/I(\breve \O)\mid g^{-1}b\sigma(g)\in IxI\}.$$ 

Besides being a natural analog of the classical theory, these varieties play an important role when studying the special fiber of both Shimura varieties and moduli spaces of shtukas. More precisely, affine Deligne-Lusztig varieties describe the Kottwitz-Rapoport stratification of the special fiber of Rapoport-Zink moduli spaces \cite{RapShimura}. However, their geometry is much harder to understand than in the classical case. Even the question of whether $X_x(b)\neq \emptyset$ for a given pair $(x,b)$ is a notoriously difficult problem and remains unsettled. Questions on the geometry of $X_x(b)$ are closely related to understanding the intersection of $IxI$ with the $\sigma$-conjugacy class $$[b]=\{g^{-1}b\sigma(g)\mid g\in G(\breve F)\}.$$
These $\sigma$-conjugacy classes $B(G)=\{[b]\mid b\in G(\breve F)\}$ were classified by Kottwitz in \cite{KotIsoII} by two invariants: the Newton point and the Kottwitz point. The set $B(G)$ has a partial order, induced by requiring equality of the Kottwitz points, and using the dominance order on Newton points. 

Denote by $B(G)_x$ the subset of those $\sigma$-conjugacy classes that meet the given double coset $IxI$. A necessary condition for $[b]\in B(G)_x$ is that the Kottwitz points of $b$ and $x$ coincide.  However, this necessary condition is far from sufficient, and a complete description of $B(G)_x$ is only known in very special cases. Whenever it is non-empty, $\mathcal N_{[b],x}:= [b]\cap IxI$ is shown in \cite{RR} to be the set of geometric points of a locally closed reduced subscheme of $IxI$; namely, the Newton stratum associated with $[b]$.  Another natural (and similarly unsolved) question is to describe the closure of $\mathcal N_{[b],x}$ in $IxI$.

This situation is in stark contrast to the case in which $I$ is replaced by a hyperspecial maximal compact subgroup $K$ of $G$ (assuming that this exists). Here, the $K$-double cosets in $G(\breve F)$ are indexed by the dominant cocharacters $\mu$ of a fixed maximal torus of $G$. For a given $\mu$, the set of $\sigma$-conjugacy classes meeting $K\mu(t)K$ is non-empty if and only if the Newton point $\nu(b)$ is less than or equal to $\mu$ in dominance order, in addition to the obvious criterion that the Kottwitz points of $b$ and $\mu$ coincide; see \cite{KRFcrystals, Luc, Gashi3}. Furthermore, the similarly defined affine Deligne-Lusztig varieties are equidimensional of known dimension, and the closure of a Newton stratum is equal to the union of all Newton strata associated with $[b']\leq [b]$; compare \cite{grothconj}. We remark that, strictly speaking, some of these results are only shown under the additional assumption that $G$ is split. However, the same proofs also work without this additional assumption.

In the case of Iwahori-double cosets, however, none of these properties hold in general.  In particular, the Newton strata  are not equidimensional \cite[Sec.~5]{GHdim}, it is unknown under which conditions the closure of any Newton stratum is a union of strata, and we do not even have a general conjecture describing the set $B(G)_x$. The goal of this paper is to identify a large family of elements $x \in \widetilde{W}$ for which both the non-emptiness pattern for the intersections $[b]\cap IxI$ and the closure relations of Newton strata resemble the known picture for hyperspecial subgroups $K$.

\subsection{Statements of the main theorems}\label{sec:mainthms}

One element of the Newton poset $B(G)_x$ that is of particular interest is the unique maximal element, which coincides with the generic $\sigma$-conjugacy class $[b_x]$ in the irreducible double coset $IxI$. There are descriptions of $[b_x]$ that give finite algorithms to compute it, but they are not themselves closed formulas; see \cite[Cor.~5.6]{VieTrunc} and \cite[Thm.~3.2]{BeMaxNPs}.

We define an element $x \in \widetilde{W}$ to be \emph{\cd}~if it satisfies the equality
\begin{equation*}
\ell(x) - \ell(\eta(x)) = \langle 2\rho, \nu_x \rangle - \defect (b_x),
\end{equation*}
where $\rho$ is the half-sum of the positive roots and $\nu_x = \nu(b_x)$ is the generic Newton point in $IxI$. See Definition \ref{defcd} for a formal discussion of cordial elements, and refer to Section \ref{sec:notation} for the definitions of the map $\eta$ and the defect. To introduce our first main theorem, we comment on the chosen terminology. The notion \cd~refers to the fact (explained in Section \ref{sec2}) that satisfying the above equality is equivalent to the condition that the dimension of the affine Deligne-Lusztig variety $X_x(b_x)$ agrees with its virtual dimension in the sense of \cite{HeAnnals}. The cordial condition is thus equivalent to the condition that this variety ``has the \emph{cor}rect \emph{di}mension''.  Moreover, the following theorem illustrates that the cordial condition also gives rise to especially ``well-behaved'' geometry for the associated Newton strata.

\begin{thm}\label{thmmaincd}
Let $x$ be \cd. Then $B(G)_x$ is saturated, and for $[b]\in B(G)_x$ we have
\begin{enumerate}
\item[(a)] $\mathcal N_{[b],x}$ is equidimensional, and its codimension in $IxI$ is equal to the maximal length of any chain from $[b]$ to $[b_x]$ in $B(G)_x$ \emph{(}or, equivalently, in $B(G)$\emph{)}.
\item[(b)] $\overline {\mathcal N_{[b],x}}$ is the union of all $\mathcal N_{[b'],x}$ with $[b']\in B(G)_x$ and $[b']\leq [b]$.
\end{enumerate}
\end{thm}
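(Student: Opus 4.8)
The strategy is to reduce everything to a dimension count that the cordial hypothesis makes sharp. The starting point is the fundamental identity relating the dimension of an affine Deligne-Lusztig variety in the Iwahori case to the quantity $\ell(x)$, the Newton point, and the defect, together with the stratification $IxI = \coprod_{[b] \in B(G)_x} \mathcal N_{[b],x}$ into Newton strata. I would first record that, for any $[b] \in B(G)_x$, one has $\dim X_x(b) \le \dim X_x(b_x)$ with equality of the expected form only in the generic case, and that $\codim_{IxI} \mathcal N_{[b],x}$ is governed by $\dim X_x(b_x) - \dim X_x(b)$ plus a correction coming from the difference of defects. The cordiality of $x$ says precisely that $\dim X_x(b_x)$ equals its virtual dimension $\tfrac12(\ell(x) + \ell(\eta(x)) - \defect(b_x)) - \langle \rho, \nu_x\rangle$ (as explained in Section \ref{sec2} of the excerpt), so for the generic stratum the bound is met on the nose.

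\textbf{Step 1: Saturation of $B(G)_x$.} I would show that if $[b_1] \le [b] \le [b_2]$ with $[b_1], [b_2] \in B(G)_x$, then $[b] \in B(G)_x$. The key input is a non-emptiness criterion for $\mathcal N_{[b],x}$ (equivalently for $X_x(b) \ne \emptyset$) together with the dimension formula: since $x$ is cordial, the dimension of $X_x(b)$ for \emph{every} $[b] \le [b_x]$ with the correct Kottwitz point is forced to be non-negative by the virtual dimension formula — the cordial equality removes the defect-gap that normally obstructs this. Hence every such $[b]$ with $[b] \le [b_x]$ actually occurs, which gives saturation (the lower bound $[b_1]$ is automatic once we know the full interval below $[b_x]$ is present, but I would check the combinatorics of $B(G)$ carefully here).

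\textbf{Step 2: Part (a), equidimensionality and codimension.} Granting that $\mathcal N_{[b],x} \ne \emptyset$, its dimension is computed via the same formula: $\dim \mathcal N_{[b],x} = \ell(x) - \langle 2\rho, \nu_x - \nu(b)\rangle - (\text{terms in defects})$, and one checks this is independent of choices, giving equidimensionality. Subtracting from $\dim IxI = \ell(x)$ and using the cordial identity, the codimension telescopes into a sum over a maximal chain from $[b]$ to $[b_x]$, where each covering step in $B(G)$ contributes exactly $1$ — this is the standard fact that covering relations in $B(G)$ have length-one "jumps" measured by $\langle 2\rho, \cdot\rangle$ minus defect changes, which is where the equivalence of the two chain-length statements (in $B(G)_x$ versus $B(G)$) comes from, using saturation from Step 1.

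\textbf{Step 3: Part (b), closure relations.} The inclusion $\overline{\mathcal N_{[b],x}} \supseteq \bigcup_{[b'] \le [b]} \mathcal N_{[b'],x}$ follows from semicontinuity of the Newton stratification (the Newton point can only go down under specialization — this is standard, cf. \cite{RR}). For the reverse inclusion, I would argue by dimension: $\overline{\mathcal N_{[b],x}}$ is a union of strata by closedness, so it is $\bigcup_{[b'] \in S} \mathcal N_{[b'],x}$ for some down-closed-in-closure set $S$; every $[b'] \in S$ satisfies $[b'] \le [b]$ by semicontinuity, and conversely if $[b'] \le [b]$ then the codimension count from Step 2 forces $\mathcal N_{[b'],x}$ to have codimension in $\overline{\mathcal N_{[b],x}}$ equal to the chain length from $[b']$ to $[b]$, which by the equidimensionality and the chain-length additivity (again using cordiality) means it must actually lie in the closure rather than being a separate component. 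This "codimension forces incidence" step is the delicate one.

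\textbf{Main obstacle.} I expect the hard part to be Step 3's reverse inclusion — turning the numerical codimension equality into an actual containment of strata in the closure. In general, having the right codimension does not imply a stratum lies in the closure of another; one needs either a connectedness/irreducibility input on $\mathcal N_{[b],x}$ or an explicit degeneration argument producing points of $\mathcal N_{[b'],x}$ as specializations. I would look for a direct construction, perhaps via a one-parameter family inside $IxI$ (using the structure of the double coset as an iterated affine-space bundle) that degenerates a generic point of $\mathcal N_{[b],x}$ to one with strictly smaller Newton point, combined with the dimension bookkeeping to guarantee the limit lands in the expected stratum; the cordial hypothesis should be exactly what makes the dimensions line up so that no stratum is "skipped."
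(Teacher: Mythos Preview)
Your proposal has the right instinct that cordiality makes the dimension bounds sharp, but it is missing the structural input that actually drives the argument: \emph{purity} of the Newton stratification. In the paper, Theorem \ref{thmmaincd} is deduced in one line from the stronger Theorem \ref{thm1strong}, whose hypothesis (1) is immediate from cordiality together with He's inequality $\dim X_x(b')\le d_x(b')$. The real content sits inside Theorem \ref{thm1strong}: topological strong purity (Hamacher, \cite{Ham_APS}) guarantees that passing from any point of a Newton stratum to the complement of that stratum in its closure drops dimension by exactly one. Combined with the \emph{upper} bound on the codimension of each stratum coming from the cordial dimension count (via Corollary \ref{coradlvnewton1} and the chain-length formula in $B(G)$), this forces every intermediate $[b']$ along a maximal chain to appear, and the closure relations and equidimensionality fall out simultaneously via \cite[Lem.~5.12]{newtonsurv}. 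Your Step 3 correctly identifies ``codimension forces incidence'' as the crux, but an ad hoc one-parameter degeneration will not substitute for purity here; relatedly, your claim that $\overline{\mathcal N_{[b],x}}$ is a union of strata ``by closedness'' is unjustified --- that is part of what is being proved, not an input.

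Your Step 1 contains a separate genuine error: you try to show that \emph{every} $[b]\le [b_x]$ with the correct Kottwitz point lies in $B(G)_x$, arguing that non-negativity of the virtual dimension forces $X_x(b)\neq\emptyset$. That implication fails, and the conclusion is in fact false even for cordial $x$: the paper remarks just after the theorem statement that elements below a certain lower bound may be missing from $B(G)_x$. Saturation is the weaker assertion that \emph{intermediate} elements between two members of $B(G)_x$ are present, and it too is a consequence of the purity argument rather than of any direct non-emptiness criterion.
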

\noindent Here a subset $S$ of $B(G)$ is called saturated if for any $[b_1]\leq [b_2]\leq [b_3]$ in $B(G)$ such that $[b_1],[b_3]\in S$, we also have $[b_2]\in S$.

Theorem \ref{thmmaincd} gives a condition that can be checked from the maximal element of $B(G)_x$ alone, but implies that the shape of the entire poset $B(G)_x$, as well as all dimensions and closures of the Newton strata within $IxI$, behave as nicely as the Newton strata for $K$-double cosets. The only difference that may occur is that the set $B(G)_x$ does not in general contain all elements of the form $\{[b]\in B(G) \mid [b] \leq [b_x]\}$; small elements up to a certain lower bound (discussed in \cite{VieMinNP}) may be missing. In fact, one can also show a stronger statement, which has assumptions that are more difficult to check in general; compare Theorem \ref{thm1strong}. In Theorem \ref{thmconv}, we also prove a partial converse of Theorem \ref{thmmaincd}, showing that non-cordial elements cannot share all of these same good geometric properties.

Our next theorem explicitly identifies several families of \cd ~elements. For sufficiently low-rank groups, it is sometimes possible to directly calculate the Newton poset $B(G)_x$ for every $x \in \widetilde{W}$.  For example, all of the questions we address in this paper can be settled for the group $G={\rm SL}_3$ using the first author's thesis \cite{BeThesis}. For this group, an element $x$ is cordial if and only if $B(G)_x$ is saturated, and one can give a complete description of the set of cordial elements. Further, for $G={\rm SL}_3$ all Newton strata are equidimensional, and part (b) of Theorem \ref{thmmaincd} also holds in all cases.  See Example \ref{SL3Ex} for more details.

In general, it appears to be a fairly difficult problem to fully characterize the \cd~elements in a manner which does not require specific knowledge of the generic Newton point, but we provide several  interesting families of \cd~elements in the following theorem. 

\begin{thm}\label{T:MainEx} 
Assume that $G$ is quasi-split. Let $x=t^{v\lambda}w \in \widetilde{W}$ with $\lambda$ dominant.
\begin{enumerate}
\item[(a)] If $x$ is in the antidominant Weyl chamber in which case $v=w_0$, then $x$ is cordial.
\end{enumerate}
Now suppose that $G$ is split, connected, and semisimple. 
Assume that for all simple roots $\alpha_i$ we have
$\langle \alpha_i, \lambda \rangle > M,$ where $M$ is a fixed constant depending on $G$ and $x$.
\begin{enumerate}
\item[(b)] If any reduced expression for $\eta(x)= v^{-1}wv \in W$ 
uses each simple reflection at most once, then $x$ is \cd.
\item[(c)] If $x$ is in the dominant Weyl chamber in which case $v=1$, then $x$ is cordial if and only if every reduced expression for $\eta(x)=w$
 avoids all non-simple reflections $s_\alpha$ such that $\ell(s_\alpha) = \langle 2\rho, \alpha^\vee \rangle -1$. 
\end{enumerate}
\end{thm}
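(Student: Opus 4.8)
The plan is to reduce Theorem~\ref{T:MainEx} to a careful analysis of the defect and the generic Newton point $\nu_x$ via the quantum Bruhat graph, exploiting that for $\lambda$ deep in the dominant chamber (i.e.\ with $\langle\alpha_i,\lambda\rangle$ large) the formulas from \cite{VieTrunc} for $[b_x]$ become ``stable'' and can be read off combinatorially. Concretely, for $x=t^{v\lambda}w$ with $\lambda$ regular and very dominant, the generic Newton point should be computed by a minimal-length path in the quantum Bruhat graph associated with $\eta(x)=v^{-1}wv$, and the quantity $\langle 2\rho,\nu_x\rangle-\defect(b_x)$ on the right-hand side of the cordiality equation should reduce to a count involving the weighted length of such a path. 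I would first establish this dictionary as a lemma, translating the cordiality equation $\ell(x)-\ell(\eta(x))=\langle 2\rho,\nu_x\rangle-\defect(b_x)$ into the purely combinatorial statement that a certain shortest path in the quantum Bruhat graph from $1$ to $\eta(x)$ (or from $\eta(x)$ to $1$) has weight zero, equivalently uses only ``quantum-efficient'' edges.

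For part~(a), the point is that in the antidominant chamber $v=w_0$, so $\eta(x)=w_0^{-1}ww_0$, and the element $x=t^{w_0\lambda}w$ has a very special length: $\ell(x)=\langle 2\rho,\lambda\rangle-\ell(w_0)+\ell(w_0 w w_0\cdots)$, and more importantly $b_x$ can be computed directly — I expect $\nu_x$ to essentially be the Newton point of $t^{w_0\lambda}$-type, with the defect accounting precisely for the discrepancy. This case should not need the genericity hypothesis on $\lambda$ and should follow from a direct length computation plus Kottwitz's description of the Newton and Kottwitz points; I would verify it by checking the cordiality identity term by term using the explicit description of $[b_x]$ available in this chamber (cf.\ the fully commutative / antidominant analysis).

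For parts~(b) and~(c), with $G$ split semisimple and $\langle\alpha_i,\lambda\rangle>M$, I would invoke the combinatorial lemma above: the generic Newton point is governed by reduced words for $\eta(x)$, and the ``path weight'' that obstructs cordiality is exactly a sum over reflections $s_\alpha$ appearing in a reduced expression with $\ell(s_\alpha)<\langle 2\rho,\alpha^\vee\rangle-1$, where each such reflection contributes a deficiency. If every reduced expression uses each simple reflection at most once (part~(b)), then $\eta(x)$ is fully commutative in a strong sense and no ``long'' reflection $s_\alpha$ with a gap can occur, forcing the path weight to vanish and hence cordiality. For part~(c), with $v=1$ so $\eta(x)=w$, I would show that the cordiality defect is precisely $\sum$ over the critical reflections $s_\alpha$ with $\ell(s_\alpha)=\langle 2\rho,\alpha^\vee\rangle-1$ that are forced to appear, so that cordiality holds if and only if some — equivalently every, by a reduced-word-independence argument — reduced expression avoids all of them; the ``if and only if'' here is where I would need the sharp lower bound on $\nu_x$ and a matching upper bound, both coming from quantum Bruhat graph path estimates.

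The main obstacle I anticipate is establishing the stabilization of $[b_x]$ as $\lambda$ grows and pinning down the exact constant $M$: the algorithms of \cite{VieTrunc} and \cite{BeMaxNPs} are not closed formulas, so I would need to argue that for $\langle\alpha_i,\lambda\rangle$ sufficiently large the recursive truncation procedure terminates after a bounded number of steps determined by $w$ (hence by $x$) and produces a Newton point whose pairing with $2\rho$ and whose defect depend on $\lambda$ only through the ``obvious'' linear term $\langle 2\rho,\lambda\rangle$ plus a correction computed from a shortest weighted path in the quantum Bruhat graph for $\eta(x)$. Making this stabilization effective — and checking that the resulting path-weight really is a reduced-word invariant equal to a sum of local contributions from individual reflections — is the technical heart; once that is in place, parts~(b) and~(c) are combinatorial corollaries, and part~(a) is a direct special-chamber computation.
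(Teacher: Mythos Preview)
Your overall framework is right --- the paper does reduce parts (b) and (c) to quantum Bruhat graph combinatorics via the formula $\nu_x=\lambda-\alpha_x^\vee$ from \cite{BeMaxNPs} --- but your translation of the cordiality condition is wrong, and this propagates through the rest of the sketch. Under superregularity $\nu_x$ is integral, so $\defect(b_x)=0$, and the paper shows (Proposition~\ref{etaminpath}) that cordiality is equivalent to $d_\Gamma(w^{-1}v,v)=\ell(\eta(x))$: the \emph{length} of a shortest path between the specific vertices $w^{-1}v$ and $v$ must equal $\ell(v^{-1}wv)$. It is \emph{not} the statement that a shortest path has weight zero, nor is the path between $1$ and $\eta(x)$. (The weight $\alpha_x^\vee$ is essentially never zero; e.g.\ for $v=1$ and $w$ small-height-avoiding the minimal path uses $\ell(w)$ downward simple edges, each of positive weight.) Relatedly, you have the role of the reflections reversed: those $s_\alpha$ with $\ell(s_\alpha)=\langle 2\rho,\alpha^\vee\rangle-1$ are exactly the ones that \emph{do} give downward edges in $\Gamma_G$; when $w$ \emph{contains} a non-simple one you get a shortcut, so $d_\Gamma(w,1)<\ell(w)$ and $x$ is \emph{not} cordial. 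Reflections with strict inequality $\ell(s_\alpha)<\langle 2\rho,\alpha^\vee\rangle-1$ never label downward edges and play no role.

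Two further gaps. First, for (b) the paper's argument is not about avoiding long reflections at all: any path of length $m$ from $w^{-1}v$ to $v$ yields an expression $\eta(x)=s_{\beta_1}\cdots s_{\beta_m}$ as a product of reflections, so $\ell_R(\eta(x))\le d_\Gamma(w^{-1}v,v)$; for standard parabolic Coxeter elements $\ell_R=\ell$ by \cite{BDSW}, and the reverse inequality is trivial (Lemma~\ref{ineqrmk}). Second, for the ``only if'' direction of (c) you are missing the crucial ingredient: one must show that a shortest path from $w$ to $1$ can always be taken to use \emph{only downward} edges (Proposition~\ref{mindown}), so that it corresponds to a length-additive factorization $w=s_{\beta_1}\cdots s_{\beta_r}$ with each $\ell(s_{\beta_i})=\langle 2\rho,\beta_i^\vee\rangle-1$; if $w$ is small-height-avoiding this forces all $s_{\beta_i}$ to be simple. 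This requires a non-obvious lemma (Lemma~\ref{refprod}) on how a cocover of such a reflection refactors. Finally, you need not worry about stabilization of $[b_x]$: that is precisely the content of \cite[Thm.~3.2]{BeMaxNPs}, which the paper invokes as a black box, and the constant $M$ is simply the one given there. Part~(a) is indeed a direct computation without superregularity: one checks $[b_x]=[t^\lambda]$ (using $t^{w_0\lambda}\in\overline{IxI}$ and Mazur's inequality), hence $\defect(b_x)=0$ and $\langle 2\rho,\nu_x\rangle=\ell(t^\lambda)=\ell(x)-\ell(\eta(x))$.
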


The hypotheses on $G$ in parts (b) and (c) of Theorem \ref{T:MainEx}  are a direct reflection of the reliance upon the first author's formula for calculating the generic Newton point via the quantum Bruhat graph \cite{BeMaxNPs}, which is stated in precisely this level of generality.  The additional hypothesis on the coroot $\lambda$ which keeps $x$ sufficiently far from the walls of any Weyl chamber is referred to as \emph{superregularity}; see \cite{BeMaxNPs} for precise formulas for the constant $M$ and related discussion. Under this superregularity hypothesis, we characterize cordiality purely in terms of calculating lengths of certain paths in the quantum Bruhat graph; see Proposition \ref{etaminpath}.
Those elements which use each simple reflection at most once as in (b) are called \emph{standard parabolic Coxeter elements} in Definition \ref{parcoxeter}. We make the condition appearing in (c) precise in Definition \ref{sha}, where we refer to those elements as \emph{\sha}; see Section \ref{spcsha} for further discussion of this terminology and related properties. 

\begin{remark}
Throughout the paper we assume that the local field $F$ is of equal characteristic $p$. Instead, one can also study the analogous questions replacing $F$ by the fraction field of the Witt ring $W(\mathbb F_q)$. In that case, the loop group and the affine flag variety are perfect schemes; see \cite{Zhu, BhattScholze}. Our results on cordiality of certain elements (in particular Theorem \ref{T:MainEx}), as well as on the non-emptiness and dimensions of affine Deligne-Lusztig varieties, can be directly translated to corresponding results in the arithmetic case, and thus they still hold in that context. Indeed, all of these assertions can be translated into properties of associated elements of the extended affine Weyl group, and degrees of certain class polynomials via \cite[Sec.~6]{HeAnnals}. These results are then independent of the characteristic of the chosen field. On the other hand, the proof of Theorem \ref{thmmaincd} that we give here does not directly generalize to mixed characteristic. In particular, there are currently no analogues for our assertions concerning closure relations of the Newton strata.

Furthermore, we assume that $G$ is quasi-split. Using the argument in \cite[Sec.~2]{GoertzHeNie}, one can reduce the question of determining non-emptiness and dimensions of affine Deligne-Lusztig varieties for a general connected reductive group over $F$ to those for the quasi-split inner form of its adjoint group. In this way, our results imply corresponding results for these more general groups.
\end{remark}

\vskip 5pt

\noindent{\it Acknowledgments.} The first author was partially supported by National Science Foundation grant 1600982 ``RUI: Affine flags, $p$-adic representations, and quantum cohomology''. The second author was partially supported by European Research Council Starting Grant 277889 ``Moduli spaces of local $G$-shtukas'', and later by European Research Council Consolidator Grant 770936 ``Newton strata''. The collaborative visit during which this project began was supported by both this ERC Starting Grant held by the second author, as well as by Simons Collaboration Grant 318716 held by the first author.  Part of this work was carried out while the first author was a Visiting Scientist at the Max-Planck-Institut f\"{u}r Mathematik, and a Director's Mathematician in Residence at the Budapest Semesters in Mathematics, jointly supported by the R\'enyi Alfr\'ed Matematikai Kutat\'oint\'ezet. We thank Ulrich G\"ortz for a helpful discussion, Xuhua He for his comments on a previous version of this work, and Petra Schwer and Anne Thomas for early discussions of an alternative method using the Deligne-Lusztig galleries from \cite{MST}. The first author gratefully acknowledges many useful conversations about special cases with Elizabeth Rule and Noah Weinstein. Finally, the authors thank the anonymous referee for their helpful suggestions.

\section{Notation}\label{sec:notation}

Let $F$ be a local field of characteristic $p$ with ring of integers $\mathcal O_F$, uniformizer $t$ and residue field $\mathbb{F}_q$; that is, $F\cong \F_q(\!(t)\!)$. Let $\breve F\cong \overline{\F}_q(\!(t)\!)$ denote the completion of the maximal unramified extension of $F$, and $\breve{\mathcal O}$ its ring of integers. Let $\sigma$ denote the Frobenius of $\breve F$ over $F$.  

Let $G$ be a quasi-split connected reductive group over $F$. Let $S$ be a maximal $\breve{F}$-split torus of $G$ which contains a maximal split torus. Let $T=C_G(S)$ be its centralizer, a maximal torus. Let $\mathcal A$ be the apartment of the Bruhat-Tits building of $G_{\breve F}$ corresponding to $S_{\breve F}$. Then the $\sigma$-action on the building preserves $\mathcal A$. Let $\mathfrak a$ be an alcove in $\mathcal A$ that is fixed by $\sigma$. Let $I$ be the corresponding Iwahori subgroup.

Let $N_T$ denote the normalizer of $T$. Then the (relative) Weyl group $W$ of $G$ is defined as $N_T(\breve F)/T(\breve F)$. The Iwahori-Weyl group is $\widetilde W=N_T(\breve F)/(T(\breve F)\cap \breve I)$.  The Frobenius $\sigma$ of $\breve F$ over $F$ acts on $G(\breve F)$ and also induces an automorphism of $\widetilde W$, which we denote again by $\sigma$. We fix a special vertex of the base alcove $\mathfrak a$. This induces a splitting of the natural projection $\widetilde W\rightarrow W$, and an isomorphism $\widetilde W\cong X_*(T)_{I_F}\rtimes W$ where $I_F$ is the inertia subgroup of the absolute Galois group of $F$. We then have $$G(\breve F)=\coprod_{x\in \widetilde W}I(\breve \O)xI(\breve \O).$$ Here, for every $x\in\widetilde W$ we choose a representative in $G(\breve F)$ which we denote again by $x$.

To define a length function $\ell$ on $\widetilde W$ let $W_a$ be the affine Weyl group. We have $\widetilde W\cong W_a\rtimes \Omega$ where $\Omega$ is the normalizer of the base alcove. Since we fixed $I$, we obtain a length function $\ell$ and a Bruhat order $\leq$ on the infinite Coxeter group $W_a$. We extend $\ell$ and $\leq$ from $W_a$ to $\widetilde W$ by setting $\ell(\omega)=0$ for $\omega\in \Omega$, and defining $x\leq y$ if and only if $x$ and $y$ are of the form $x'\omega$ and $y'\omega$ for some $\omega\in \Omega$ and $x'\leq y'\in W_a$.

We choose the dominant Weyl chamber to be the Weyl chamber containing $\mathfrak a$, and let $B$ be the corresponding Borel subgroup of $G$.  Write $x\in \widetilde W$ as $x=v\lambda(t)v^{-1}w=:t^{v\lambda}w$ where $v,w\in W$, $\lambda\in X_*(T)$, and $t^{\lambda}v^{-1}w$ maps the base alcove to the dominant chamber. The map $\eta:\widetilde W\rightarrow W$ is then defined by $\eta(x)=\sigma^{-1}(v^{-1}w)v$. Notice that in the literature, this map is sometimes also denoted $\eta_{\delta}$ to emphasize that $\sigma$ induces an automorphism $\delta$ of $\widetilde W$, which is used in the definition of $\eta$.

For $b\in G(\breve F)$, let $[b]=\{g^{-1}b\sigma(g)\mid g\in G(\breve F)\}$ be its $\sigma$-conjugacy class, and let $B(G)$ denote the set of $\sigma$-conjugacy classes. The elements $[b]\in B(G)$ are classified by two invariants (compare \cite{KotIsoI, KotIsoII, RR}).  The first is the Newton point $\nu(b)\in N(G)=(\Hom(\mathbb D_{\overline F},G_{\overline F})/G(\overline F))^{\Gamma}$, where $\mathbb D$ is the pro-torus with character group $\Q$, where $G(\overline F)$ acts by conjugation, and where $\Gamma$ denotes the absolute Galois group of $F$. We have an identification with the set of dominant rational Galois-invariant cocharacters, $N(G)=(X_*(T)_{\Q}/W)^{\Gamma}=X_*(T)_{\Q,\dom}^{\Gamma}$.
The second invariant is the Kottwitz point $\kappa_G(b)\in \pi_1(G)_{\Gamma}$. Here, $\pi_1(G)$ is the quotient of $X_*(T)$ by the coroot lattice. The two invariants $\nu(b)$ and $\kappa_G(b)$ have the same image in $\pi_1(G)_{\Gamma,\Q}$.  For a fixed $x\in\widetilde W$, denote the set $B(G)_x$ of $\sigma$-conjugacy classes $[b]\in B(G)$ such that $\mathcal N_{[b],x}:= [b]\cap IxI\neq \emptyset$. 

The defect of $[b]\in B(G)$ is defined as $\defect(b) = \rk_F G-\rk_F J_b$ where $J_b$ is the reductive group over $F$ with $$J_b(F)=\{g\in G(\breve F)\mid gb=b\sigma(g)\}.$$ 
 There is a partial ordering $\leq$ on $B(G)$ defined by $[b]\leq [b']$ if $\kappa_G(b)=\kappa_G(b')$ and $\nu(b)\leq \nu(b')$; i.e.\ the difference $\nu(b')-\nu(b)$ is a non-negative linear combination of positive coroots; compare \cite[Sec.~2]{RR}.

\section{Maximal Newton points and \cd~elements} \label{sec2}

The aim of this section is to formally define \cd ~elements and to prove Theorem \ref{thmmaincd}. Fix $x\in\widetilde W$ and $b\in G(\breve F)$. The associated affine Deligne-Lusztig variety is defined to be the locally closed, reduced subvariety  $X_x(b)$ of the affine flag variety with $$X_x(b)(\overline{\mathbb F}_q)=\{g\in G(\breve F)/I(\breve \O)\mid g^{-1}b\sigma(g)\in I(\breve \O)xI(\breve \O)\}.$$  In the first subsection, we compute the dimension of an affine Deligne-Lusztig variety $X_x(b)$ in terms of the dimension of the corresponding Newton stratum in $IxI$. In the second subsection, we compare the expression obtained in this way to He's virtual dimension of the same affine Deligne-Lusztig variety. If these two dimensions agree for the generic $\sigma$-conjugacy class in $IxI$, we will call the element $x$ \cd .

\begin{definition}
Given an element $x \in \widetilde{W}$, let $[b_x]$ be the $\sigma$-conjugacy class in the (unique) generic point of $IxI$, and thus the unique maximal element in $B(G)_x$ with respect to the partial ordering on $B(G)$. 

We define the {\emph{Newton point}} $\nu(x)$ of $x$ to be the Newton point $\nu(\dot x)$ of $[\dot x]$ for any representative $\dot x$ of $x$ in $G(F)$. The {\emph{maximal Newton point}} $\nu_x$ of $x$ is then defined as the Newton point of $[b_x]$.
\end{definition}

By definition, $\nu_x$ satisfies $\lambda \leq \nu_x$ for all Newton points $\lambda$ of elements of $B(G)_x$.
The first concrete description of the maximal element of $B(G)_x$ was given by the second author \cite[Cor.~5.6]{VieTrunc}, a weaker version of which can be expressed as $$\nu_x = \max \{ \nu(y) \mid y \in \widetilde{W},\  y \leq x \}.$$ Here the maximum is taken with respect to dominance order, and the elements $y$ and $x$ are related by Bruhat order. Note that this yields a finite algorithm to compute $\nu_x$, but not a closed formula. A slightly more explicit description of $\nu_x$ provided by the first author \cite[Thm.~3.2]{BeMaxNPs} is discussed in Section \ref{sec3}, albeit under an additional superregularity hypothesis on $\lambda$, and for split $G$.

\subsection{Comparing dimensions of Newton strata and affine Deligne-Lusztig varieties}

Although we do not dispose of a closed formula for $[b_x]$ here, we can relate its Newton point $\nu_x$ to the dimension of the corresponding affine Deligne-Lusztig variety. 

\begin{lemma}\label{lemdimgen}
Let $x\in \widetilde W$. Then $X_x(b_x)$ is equidimensional with $$\dim~X_x(b_x)=\ell(x)-\langle 2\rho, \nu_x \rangle .$$
\end{lemma}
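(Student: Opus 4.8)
The plan is to compute $\dim X_x(b_x)$ by relating it to the dimension of the Newton stratum $\mathcal{N}_{[b_x],x} = [b_x] \cap IxI$, which is the generic stratum and hence dense in $IxI$, and then using a general dimension formula relating affine Deligne-Lusztig varieties to the intersection of a $\sigma$-conjugacy class with an Iwahori double coset. First I would recall the fibration-type relationship: the variety $X_x(b)$ maps to $[b] \cap IxI / (\text{something})$, more precisely, one considers the map $g \mapsto g^{-1} b \sigma(g)$ from (a suitable subset of) $G(\breve F)/I$ to $IxI$, whose image is $[b] \cap IxI$ and whose fibers are closely controlled by the centralizer $J_b$. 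The standard bookkeeping (as in \cite{GHKR, RR, HeAnnals}) gives, for $[b]$ with Newton point $\nu$,
\begin{equation*}
\dim X_x(b) = \dim \left( [b] \cap IxI \right) - \langle 2\rho, \nu \rangle + \tfrac{1}{2}\left(\defect(b) + \dim(\text{torus part})\right),
\end{equation*}
or some such identity; the precise shape I would need to pin down from the literature, but the key point is that for the \emph{generic} class $[b_x]$, the stratum $\mathcal{N}_{[b_x],x}$ is open dense in the irreducible variety $IxI$, so $\dim \mathcal{N}_{[b_x],x} = \dim IxI = \ell(x)$.

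The main step is therefore to establish the clean formula $\dim X_x(b_x) = \ell(x) - \langle 2\rho, \nu_x\rangle$, which requires that the correction terms involving the defect cancel or vanish for the generic class. I expect this to follow because for the generic $\sigma$-conjugacy class the relevant map is as ``spread out'' as possible: the fibers of $g \mapsto g^{-1}b_x\sigma(g)$ over the open stratum have dimension exactly $\dim J_{b_x}/(\text{compact part})$ minus the expected codimension correction, and these contributions are exactly what is subsumed into writing things in terms of $\langle 2\rho, \nu_x\rangle$ alone. Concretely, I would argue that the natural projection from $X_x(b_x)$ to an appropriate quotient of the dense stratum realizes $X_x(b_x)$ as an iterated affine-space bundle over an irreducible base, which simultaneously yields equidimensionality and the dimension count. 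Equidimensionality of $X_x(b_x)$ should then be inherited from irreducibility of $IxI$ together with the bundle structure having equidimensional fibers.

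The hard part will be handling the defect/centralizer contribution correctly: one must be careful that the ``virtual'' correction terms (half the defect, plus terms measuring the difference between $\nu_x$ being central or not) genuinely collapse for the generic stratum, rather than merely for basic $b$. I would verify this either by invoking He's results \cite{HeAnnals} on the structure of $X_x(b)$ and dimensions of class polynomials — where the generic case corresponds to the top-degree term — or by a direct argument using that $\mathcal{N}_{[b_x],x}$ is open in $IxI$ and applying the general dimension formula of \cite{RR} (or \cite{GHKR}) for $\dim X_x(b)$ in terms of $\dim(IxI \cap [b])$. An alternative, perhaps cleaner, route is to use the description $\nu_x = \max\{\nu(y) \mid y \leq x\}$ recalled above together with known dimension formulas for $X_x(b)$ when $b$ is the generic element, reducing to a combinatorial identity in $\widetilde{W}$; but I would expect the geometric fibration argument to be the most transparent.
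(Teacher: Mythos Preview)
Your overall strategy matches the paper's: relate $X_x(b)$ to the Newton stratum $\mathcal N_{[b],x}$ via a product/fibration structure, then specialize to $[b_x]$, where the stratum is open dense in the irreducible scheme $IxI$. However, there is a genuine confusion in your proposal. The dimension identity that actually comes out of the product structure is
\[
\dim X_x(b) \;=\; \ell(x) - \langle 2\rho,\nu(b)\rangle - \codim \mathcal N_{[b],x},
\]
with \emph{no} defect term whatsoever. The quantity $\langle 2\rho,\nu(b)\rangle$ arises as $\dim\bigl(I_{\bar N}/y_b^{-1}I_{\sigma^{-1}(\bar N)}y_b\bigr)=\ell(y_b)$ for a $P$-fundamental alcove $y_b$ representing $[b]$; the defect only enters later, in the \emph{virtual} dimension $d_x(b)$, which is a different object. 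So the ``hard part'' you flag---making defect corrections collapse for the generic class---is a phantom difficulty created by writing down the wrong formula.

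The real work, which your proposal gestures at but does not supply, is establishing the product structure itself. The paper does this (following \cite{viehmann-wu} and \cite{HartlVie2}) by constructing, on formal completions, a finite surjective morphism
\[
\bigl((X_x(b)\times_k I_{\bar N}/y_b^{-1}I_{\sigma^{-1}(\bar N)}y_b)^\wedge\bigr)' \longrightarrow \mathcal N_{[b]}.
\]
This is not an ``iterated affine-space bundle'' on $X_x(b_x)$ as you describe, but a product decomposition (up to finite morphism) of the Newton stratum. Because the morphism is finite and surjective and the second factor is irreducible, it transfers equidimensionality of $\mathcal N_{[b_x],x}$ (open in the irreducible $IxI$, hence irreducible) to $X_x(b_x)$, and gives the dimension count. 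The alternative routes you mention---He's class polynomials, or a combinatorial argument via $\nu_x=\max\{\nu(y)\mid y\le x\}$---would at best recover the dimension formula (indeed the paper notes this is already in \cite{He-CDM}), but not the equidimensionality, which is precisely the new content of the lemma and what is needed downstream.
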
 
For the proof of Lemma \ref{lemdimgen}, we develop a more general theory for all Newton strata that will also be helpful later on. Notice that the dimension formula itself is known by \cite[Theorem 2.23]{He-CDM} using a completely different proof. However, we will crucially use the equidimensionality for our applications. The rough idea of our proof is to express dimensions of affine Deligne-Lusztig varieties using a product structure up to finite morphism on a corresponding Newton stratum. The construction closely follows the corresponding theory for hyperspecial maximal subgroups of \cite{viehmann-wu}. We therefore replace most proofs by references to the corresponding arguments given in loc.~cit. Let us first recall some well-known notions for subschemes of loop groups. Let $LG$ be the loop group associated with $G$. It is defined as the ind-group
scheme representing the functor $R\mapsto G(R(\!( t)\!))$ on the category of $\mathbb F_q$-algebras.

\begin{definition} Let 
$\mathcal{B} $ be a subscheme of the loop group $LG$.
\begin{enumerate}
\item Let $x\in \widetilde W$. Then $\mathcal B$ is \emph{bounded by $x$} if it is contained in the closure of  $IxI$ in $LG$. It is \emph{bounded} if it is contained in a finite union of double cosets $IxI$.
\item Let $I_n$ be the kernel of the projection map $I\rightarrow I(\O_F/(t^n))$. Then $\mathcal B$ is \emph{admissible} if there is an $n\in \mathbb{N}$ with $\mathcal{B}I_n=\mathcal{B}$.
\item For a bounded and admissible algebraic set with $XI_n=X$ let $$\dim X:=\dim(X/I_n) -n\cdot\dim(G).$$
\end{enumerate}
\end{definition}
Notice that this notion of dimension is normalized in a different way than the one in \cite{viehmann-wu}.
\begin{remark}\label{rem221} We can make several initial observations about subschemes of $LG$.
\begin{enumerate}
\item Let $\mathcal{B}$ be bounded. Then one easily sees that $\mathcal{B}$ is admissible if and only if there is an $n'\in \mathbb{N}$ with $I_{n'}\mathcal{B}=\mathcal{B}$. Here $n'$ can be given in terms of the bound for $\mathcal B$ and the integer $n$ arising in the definition of admissibility.
\item The dimension of a bounded and admissible subscheme of $LG$ is independent of the choice of $n.$ 
\item Similarly, one can define the codimension of a closed irreducible admissible subscheme $\mathcal B'$ of some bounded and admissible scheme $\mathcal B$. If $\mathcal B$ is also equidimensional, one easily sees that this codimension agrees with $\dim\mathcal B-\dim \mathcal B'$.
\end{enumerate}
\end{remark}

\begin{prop}\label{propvw23}
 Let $\mathcal{B}$ be a bounded subset of $LG(k)$. Then there is an integer $c\in \mathbb{N}$ such that for each $d\in \mathbb{N}$, each $g\in \mathcal{B}$ and $h\in I_{d+c}(k)$, there is an $l\in I_d(k)$ with $gh=l^{-1}g\sigma(l).$
\end{prop}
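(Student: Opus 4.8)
The plan is to reduce the statement to a filtration argument on the Iwahori subgroup, exploiting the fact that the Lang-type equation $gh = l^{-1}g\sigma(l)$ can be solved successively, one "layer" $I_d/I_{d+1}$ at a time, as long as we allow ourselves to start sufficiently deep. First I would fix a bound: since $\mathcal{B}$ is bounded, it lies in a finite union of double cosets $IxI$, and I would choose $c$ depending only on this bound so that conjugation by any $g \in \mathcal{B}$ satisfies $g^{-1} I_{m} g \subseteq I_{m-c}$ and $g I_{m} g^{-1} \subseteq I_{m-c}$ for all $m \geq c$ (such a $c$ exists because the adjoint action of a fixed element of the loop group distorts the Moy-Prasad / congruence filtration by a bounded amount; this is exactly the kind of estimate already invoked in Remark \ref{rem221}(1)). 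The same $c$, possibly enlarged, also controls $\sigma$, which acts on the filtration preserving it.

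The core of the argument is an approximation scheme. Given $g \in \mathcal{B}$ and $h \in I_{d+c}(k)$, I want to produce $l \in I_d(k)$ with $l^{-1} g \sigma(l) g^{-1} = gh g^{-1} \in I_d(k)$ (note $ghg^{-1} \in I_d$ by the choice of $c$, since $h \in I_{d+c}$). Equivalently, writing $u = ghg^{-1} \in I_d$, I must solve $l^{-1} \cdot {}^{g}\sigma(l) = u$ where ${}^{g}(\cdot)$ denotes conjugation by $g$. I would construct $l$ as a convergent product $l = \cdots l_2 l_1$ with $l_j \in I_{d+j-1}$, solving the equation modulo $I_{d+1}$, then $I_{d+2}$, and so on. At stage $j$, the correction term lives in the abelian quotient $I_{d+j-1}/I_{d+j}$, and the map $l_j \mapsto l_j^{-1} \cdot {}^{g}\sigma(l_j)$ induces on this quotient the map $\bar l_j \mapsto -\bar l_j + \phi(\bar l_j)$ where $\phi$ is the semilinear map induced by ${}^{g}\sigma$. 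The key point is that this map is surjective on the finite-dimensional $k$-vector group $I_{d+j-1}/I_{d+j}$: since $k = \overline{\mathbb F}_q$, a Frobenius-semilinear endomorphism twisted by any linear automorphism is surjective (indeed bijective) by Lang's theorem applied to the relevant vector group, or directly because $x \mapsto \phi(x) - x$ is surjective on $\mathbb A^N(\overline{\mathbb F}_q)$ for any such $\phi$. This lets me choose $l_j$ inductively so that the product converges $t$-adically in $I_d(k)$ to the desired $l$. The convergence is automatic because the partial products stabilize modulo each $I_{d+m}$, and $I_d$ is complete.

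The main obstacle I anticipate is bookkeeping the interaction between conjugation by $g$ and the filtration: I need to verify that at each stage the residual error genuinely lies one step deeper in the filtration, which requires that ${}^{g}\sigma$ maps $I_{d+j}$ into itself up to the fixed shift $c$ — and crucially that after absorbing the $l_j$ correction the new error is in $I_{d+j+1}$, not merely $I_{d+j}$. This is a commutator estimate: the difference between $l_j^{-1} u_j \,{}^{g}\sigma(l_j)$ computed in the group versus its image in the abelian quotient is a product of commutators $[l_j, \ast]$ which lie in $I_{2(d+j-1)} \subseteq I_{d+j}$ once $d + j - 1 \geq$ some absolute constant, so I may need to start the induction at a stage governed by enlarging $c$ once more. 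Rather than carry out these estimates in detail, I would cite the parallel computation in \cite[Prop.~2.3 or the proof thereof]{viehmann-wu}, observing that the only change needed is the renormalization of dimension and the replacement of the hyperspecial congruence filtration by the Iwahori Moy-Prasad filtration $\{I_n\}$, which enjoys the same formal properties (it is exhaustive, separated, complete, with abelian finite-type successive quotients, and is moved a bounded amount by conjugation by any fixed loop group element).
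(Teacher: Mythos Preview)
Your layer-by-layer Lang argument has a genuine gap: the endomorphism $\phi$ of $I_{d+j-1}/I_{d+j}$ you want to use is not well-defined. Conjugation by a general bounded $g$ does not preserve the congruence filtration but shifts it by up to $c$ in either direction, so ${}^g\sigma(I_m)\not\subseteq I_m$ in general and no map $\phi$ is induced on the graded piece. Your surjectivity principle (``a Frobenius-semilinear endomorphism twisted by a linear automorphism is surjective on $\mathbb A^N(\overline{\mathbb F}_q)$'') is correct for an endomorphism of a fixed finite-dimensional space, but here the twist moves the level, so the expression $-\bar l+\phi(\bar l)$ mixes elements of different graded pieces and cannot be formed as an equation on a single layer. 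Enlarging $c$ does not help, since the shift recurs at every stage rather than only at the start; and the reference \cite{viehmann-wu} does not support a direct filtration argument either---its proof goes through the same reduction the paper uses here.

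The paper circumvents this by first reducing to a representative for which conjugation \emph{is} compatible with the Iwahori in a structured way. One restricts to $\mathcal B\subseteq [b]\cap IxI$, takes a $P$-fundamental alcove $y_b\in\widetilde W$ with $[y_b]=[b]$ (via \cite{HeAnnals} and \cite{Nie}), and uses \cite{HamVie_Finiteness} to write every element of $\mathcal B$ as $c^{-1}y_b\sigma(c)$ with $c$ ranging over a fixed bounded set. An argument from \cite{HartlVie1} then reduces the proposition to: for every $n$ and every $g\in I_n$ there is $h\in I_n$ with $y_bg=h^{-1}y_b\sigma(h)$. For the $P$-fundamental $y_b$, conjugation respects the Iwahori factorization $I=I_MI_NI_{\bar N}$ (fixing $I_M$, contracting on $I_N$, expanding on $I_{\bar N}$), and it is precisely this structure that makes the successive-approximation argument of \cite{GHKRadlvs} go through. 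The missing idea in your proposal is this reduction to a $P$-fundamental element.
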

\begin{proof} 
Any bounded subset is contained in a finite union of Iwahori-double cosets $IxI$. Each $B(G)_x$ being finite, this implies that $\mathcal B$ only meets finitely many $[b]\in B(G)$. Considering the intersection with each $\sigma$-conjugacy class and each Iwahori double coset separately, we may assume that $\mathcal B\subset [b]\cap IxI$.

By \cite[Thm.~3.7]{HeAnnals}, there is a straight element $y_b\in \widetilde W$ whose representative in $G$ lies in $[b]$. Notice that the additional assumptions on $G$ made in loc.~cit.~are not necessary for the proof of this statement. By \cite[Thm.~1.3]{Nie}, $y_b$ is also $P$-fundamental for some semi-standard parabolic subgroup $P$ of $G$.

By \cite[Prop.~5.1]{HamVie_Finiteness}, there is a bounded subset $\mathcal C$ of $LG$ such that every element of $\mathcal B$ is $\sigma$-conjugate by an element of $\mathcal C$ to $y_b$. In this situation, the same argument as in the last paragraph of the proof of \cite[Thm.~10.1]{HartlVie1} reduces the proof to showing the following claim: For every $n\in\N$ and every $g\in I_n$ there is an $h\in I_n$ with $y_bg=h^{-1}y_b\sigma(h)$. This in its turn is shown in the same way as the statement for $n=0$, see the proof of \cite[Thm.~6.3.1]{GHKRadlvs}.
\end{proof}

\begin{cor}\label{cor26}
Let $b\in IxI$. Then the $I$-$\sigma$-conjugacy class $\mathcal C_b=\{ib\sigma(i)^{-1}\mid i\in I\}$ of $b$ is contained in $IxI$, admissible, and a smooth and locally closed subscheme of $LG$. Further, $\mathcal N_{[b],x}$ is admissible.
\end{cor}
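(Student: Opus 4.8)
The plan is to deduce the corollary from Proposition \ref{propvw23} together with standard facts about loop groups. First I would address admissibility of $\mathcal C_b$. Since $b\in IxI$, the orbit $\mathcal C_b$ is a bounded subset of $LG$ (it is contained in $IxI$ by construction, as $ib\sigma(i)^{-1}\in IxI$ whenever $b\in IxI$, using $\sigma(I)=I$ and that $I$ normalizes nothing but the statement $I\cdot IxI\cdot I = IxI$). Applying Proposition \ref{propvw23} to the bounded set $\mathcal B = \mathcal C_b$ (or just to $\{b\}$, then spreading out) gives an integer $c$ so that for every $d$ and every $h\in I_{d+c}$, the element $bh$ is $I_d$-$\sigma$-conjugate to $b$; taking $d=0$ shows that $\mathcal C_b I_c \subseteq \mathcal C_b$, hence $\mathcal C_b I_c = \mathcal C_b$, which is exactly admissibility. (Alternatively one invokes Remark \ref{rem221}(1) together with $I_c\mathcal C_b\subseteq\mathcal C_b$, the latter being immediate from the definition of $\mathcal C_b$ since $I_c\trianglelefteq I$.)

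Next I would treat the scheme structure. The $\sigma$-twisted conjugation action $i\mapsto ib\sigma(i)^{-1}$ is a morphism of ind-schemes $LG\to LG$ whose image is the orbit $\mathcal C_b$. Because $\mathcal C_b$ is admissible, we may work modulo $I_n$ for suitable $n$: the induced action of the (finite-type, smooth) group scheme $I/I_n$ on the partial affine flag variety $LG/I_n$ has $\mathcal C_b/I_n$ as a single orbit, and an orbit of a smooth algebraic group acting on a variety is smooth and locally closed in its closure. Dividing by $I_n$ and renormalizing as in the definition of $\dim$, this shows $\mathcal C_b$ is a smooth, locally closed subscheme of $LG$. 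That $\mathcal C_b\subseteq IxI$ has already been noted; since $IxI$ is itself locally closed in $LG$, local closedness of $\mathcal C_b$ in $IxI$ and in $LG$ coincide.

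Finally, admissibility of $\mathcal N_{[b],x}=[b]\cap IxI$ follows because $\mathcal N_{[b],x}$ is a union of $I$-$\sigma$-conjugacy classes $\mathcal C_{b'}$ with $b'\in[b]\cap IxI$: indeed $[b]$ is stable under $\sigma$-conjugation by all of $G(\breve F)$, in particular by $I$, and $IxI$ is stable under left and right multiplication by $I$, so the $\mathcal C_{b'}$ partition $\mathcal N_{[b],x}$. Now $\mathcal N_{[b],x}$ is bounded (being contained in $IxI$), so Proposition \ref{propvw23} supplies a single constant $c$ valid for the whole bounded set; taking $d=0$ gives $bh\in\mathcal C_b\subseteq\mathcal N_{[b],x}$ for all $b\in\mathcal N_{[b],x}$ and $h\in I_c$, i.e.\ $\mathcal N_{[b],x}I_c=\mathcal N_{[b],x}$, which is admissibility. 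I expect the only genuinely delicate point to be the bookkeeping that makes Proposition \ref{propvw23} yield a \emph{single} $c$ for the whole Newton stratum rather than one depending on the point; but this is exactly the content of the proposition (the constant there depends only on the bound, not on $g\in\mathcal B$), so no further work is needed.
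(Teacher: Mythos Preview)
Your proposal is correct and follows exactly the same line as the paper's proof, which is the terse two-sentence argument: admissibility in both cases comes from Proposition~\ref{propvw23} (by taking $d=0$ on the bounded set in question, precisely as you spell out), and smoothness/local closedness of $\mathcal C_b$ follows because it is a single $I$-orbit. You have merely unpacked the details.

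One small point: your parenthetical ``alternatively'' argument is not correct as written. The claim that $I_c\mathcal C_b\subseteq\mathcal C_b$ is \emph{immediate} from $I_c\trianglelefteq I$ does not hold, since for $j\in I_c$ and $ib\sigma(i)^{-1}\in\mathcal C_b$ one has $j\cdot ib\sigma(i)^{-1}=(ji)b\sigma(i)^{-1}$, which differs from $(ji)b\sigma(ji)^{-1}$ by the factor $\sigma(j)$ on the right; normality of $I_c$ in $I$ alone does not absorb this. Your main argument via Proposition~\ref{propvw23} is the one that actually works, so simply drop the alternative.
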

\begin{proof}
In both cases, admissibility follows from the previous proposition. Since $\mathcal C_b$ is one $I$-orbit, it is smooth and locally closed.
\end{proof}
The last assertion in Corollary \ref{cor26} also follows from a corresponding assertion on Newton strata in the whole loop group by He; see \cite[Thm.~A.1]{HeKR}.

\begin{definition}
Let $n\in\mathbb N$ and let $b\in \overline{IxI}$. Here $\overline{IxI}=\bigcup_{x'\leq x}IxI$ denotes the closure of $IxI$ in $LG$. We consider the following functor on the category $(Art/k)$ of Artinian local $k$-algebras with residue field $k$.
\begin{align*}
\mathscr{D}ef(b)_n:(Art/k)\to{}&(Sets),\\
  A\mapsto{}&\{\tilde b\in (\overline{IxI})(A) \text{ with }\tilde b_k=b\}/_{\cong_n}.
\end{align*}
Here, $\tilde b\cong_n \tilde b'$ if there exists a $g\in I_n(A)$ with $g_k=1$ such that $\tilde b'=g^{-1}\tilde b\sigma(g).$
We call $\mathscr{D}ef(b)_n$ the deformation functor of level $n$ of $b$.
\end{definition}

\begin{prop}\label{propdef}
The functor $\mathscr{D}ef(b)_n$ is pro-represented by the formal completion of $I_n\backslash \overline{IxI}$ at $I_nb$, which we denote by $D_{b,n}$. 
\end{prop}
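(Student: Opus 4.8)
The plan is to unwind both sides to a statement about a torsor of pointed deformations and then invoke the standard fact that a formal completion pro-represents its own deformation functor. First I would observe that $\overline{IxI}$ is an admissible (indeed $I$-stable on both sides) subscheme of $LG$, so by Remark \ref{rem221}(1) and Proposition \ref{propvw23} there is an $m \geq n$ such that $I_m$ acts trivially on $I_n \backslash \overline{IxI}$ in the relevant sense; in particular the quotient $I_n \backslash \overline{IxI}$ makes sense as an ind-scheme of finite type modulo a group of finite type. This reduces the situation to a genuine (ind-)scheme, and the point $I_n b$ is a well-defined $k$-point of it because $b \in \overline{IxI}(k)$.

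Next I would identify the functor of points of the formal completion $D_{b,n}$ of $I_n \backslash \overline{IxI}$ at $I_n b$: for an Artinian local $k$-algebra $A$ with residue field $k$, an $A$-point of $D_{b,n}$ is an $A$-point of $I_n \backslash \overline{IxI}$ reducing to $I_n b$ over $k$. Using that $I_n$ is a (pro-)smooth affine group scheme and $A$ is Artinian local, the projection $\overline{IxI}(A) \to (I_n \backslash \overline{IxI})(A)$ is surjective after the nilpotent base change — concretely, any lift over the residue field extends because $\mathrm{H}^1$ obstructions in the relevant smooth affine setting vanish for Artinian bases, so every such $A$-point lifts to some $\tilde b \in (\overline{IxI})(A)$ with $\tilde b_k = b$. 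Two lifts $\tilde b, \tilde b'$ give the same $A$-point of the quotient precisely when they differ by an element $g \in I_n(A)$, i.e. $\tilde b' = g^{-1} \tilde b \sigma(g)$; and requiring the induced $k$-point to be exactly $I_n b$ (not merely in its $I_n$-orbit) forces $g_k = 1$. This is exactly the equivalence relation $\cong_n$ in the definition of $\mathscr{D}ef(b)_n$, so we get a natural bijection $\mathscr{D}ef(b)_n(A) \cong D_{b,n}(A)$, functorial in $A$, which is the claim.

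The one subtlety, and the step I expect to be the main obstacle, is the surjectivity of $\overline{IxI}(A) \to (I_n \backslash \overline{IxI})(A)$ together with the correct bookkeeping of which $g \in I_n(A)$ are allowed: one must make sure that the $\sigma$-twisted action $g \cdot \tilde b = g^{-1} \tilde b \sigma(g)$ really is the action by which one forms the quotient $I_n \backslash \overline{IxI}$ in the loop-group setting, and that the stabilizer condition matches $g_k = 1$ rather than $g_k$ merely fixing $b$. This is a local computation on the smooth affine group $I_n$ over the Artinian base $A$, and it runs in parallel to the corresponding statement in the hyperspecial case; I would therefore carry it out by the same argument as in \cite{viehmann-wu} (adapted to the Iwahori level $I_n$ in place of a congruence subgroup of the hyperspecial group), citing that reference for the routine verification and only spelling out the points where the Iwahori normalization differs.
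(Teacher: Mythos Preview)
Your outline ultimately defers to \cite{viehmann-wu}, which is exactly what the paper does: its entire proof is the single sentence ``This is shown completely analogously to the proof of \cite[Prop.~2.9]{viehmann-wu}.'' In that sense you land in the same place.

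However, the sketch you give before that deferral misidentifies where the content lies. The quotient $I_n\backslash \overline{IxI}$ is formed by ordinary left translation, not by the $\sigma$-twisted action; so your sentence ``two lifts $\tilde b,\tilde b'$ give the same $A$-point of the quotient precisely when \ldots\ $\tilde b'=g^{-1}\tilde b\,\sigma(g)$'' is not a tautology but is \emph{the assertion to be proved}. The left-translation equivalence reads $\tilde b'=h\tilde b$ for some $h\in I_n(A)$ with $h_k=1$, and the proposition is exactly that on the formal neighbourhood of $b$ this coincides with the $\sigma$-conjugation equivalence $\cong_n$. Establishing this coincidence is precisely the work carried out in \cite[Prop.~2.9]{viehmann-wu}: one uses the approximation statement of Proposition~\ref{propvw23} (which converts right multiplication by a sufficiently deep congruence element into $\sigma$-conjugation by a slightly shallower one) together with an iterative argument over the Artinian base to pass between the two actions. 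Your surjectivity step via smoothness of $I_n$ is fine, and the stabilizer issue you raise is harmless (in a group, $h_kb=b$ already forces $h_k=1$). But the ``main obstacle'' you flag is not a matter of checking which action defines the quotient---the notation already settles that---it is a matter of proving that two genuinely different actions have the same infinitesimal orbits.
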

\begin{proof} This is shown completely analogously to the proof of \cite[Prop.~2.9]{viehmann-wu}. 
\end{proof}

For a given $n$, consider the projection morphism $D_{b,n}\rightarrow D_{b,0}$. The projection $LG\rightarrow I\backslash LG$ has sections \'etale locally. Thus we also have a (non-unique) section $s:D_{b,0}\rightarrow D_{b,n}$. Let $(D_{b,0}\times (I_n\backslash I))^\wedge$ denote the completion of $D_{b,0}\times (I_n\backslash I)$ at $(b,1)$. Using that $I_n$ is normal in $I$, the section $s$ induces a (still non-canonical) morphism $$\phi:(D_{b,0}\times (I_n\backslash I))^\wedge\to D_{b,n}$$ given
by $(b,g)\mapsto g^{-1}s(b)\sigma(g)$. 

\begin{lemma}\label{lem27}
 The morphism $\phi:(D_{b,0}\times (I_n\backslash I))^\wedge\to D_{b,n}$ is an isomorphism.
\end{lemma}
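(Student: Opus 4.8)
The plan is to identify $\phi$ as a morphism between formal schemes that are both smooth over $\Spec k$ of the same dimension, and to show that the induced map on tangent spaces is an isomorphism; by the formal inverse function theorem this suffices. First I would note that $D_{b,0}$ is the formal completion of the smooth scheme $I\backslash\overline{IxI}$ (its smooth locus; in any case the relevant stratum $I\backslash(IxI)$ is smooth and locally closed by Corollary \ref{cor26}) at the point $Ib$, and similarly $D_{b,n}$ is the completion of $I_n\backslash\overline{IxI}$ at $I_nb$, while $I_n\backslash I$ is a smooth group scheme of dimension $n\cdot\dim G$. Since $I_n\backslash\overline{IxI}\to I\backslash\overline{IxI}$ is an $(I_n\backslash I)$-torsor, étale-locally trivial, the source and target of $\phi$ both have dimension $\dim D_{b,0}+n\cdot\dim G$, so a dimension count already gives that an isomorphism is numerically possible.

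The heart of the argument is the computation on tangent spaces, exactly as in \cite[proof of Prop.~2.9 / Lem.~2.10]{viehmann-wu}. Working over the dual numbers $k[\e]/(\e^2)$, the tangent space of $D_{b,n}$ at $b$ is the space of deformations $\tilde b = (1+\e X)b$ with $X$ in the relevant ambient Lie algebra, modulo the equivalence $\tilde b\cong_n\tilde b'$, which infinitesimally means modulo the subspace $\{(\ad(b)\sigma - \mathrm{id})(Y)\mid Y\in\mathrm{Lie}(I_n)\}$ (using the notation of the $\sigma$-twisted conjugation $g^{-1}\tilde b\sigma(g)$). The tangent space of $(D_{b,0}\times(I_n\backslash I))^\wedge$ is correspondingly the tangent space of $D_{b,0}$ — the same ambient deformation space modulo the larger subspace coming from all of $\mathrm{Lie}(I)$ — together with the factor $\mathrm{Lie}(I)/\mathrm{Lie}(I_n)$. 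The differential of $\phi$ at $(b,1)$ sends a pair $(\bar X, \bar g)$ to $\bar X + (\ad(b)\sigma-\mathrm{id})(g)$, and one checks directly that this is a bijection: surjectivity because every class in $T_bD_{b,n}$ can be adjusted by an element of $\mathrm{Lie}(I)$ to land in the image of $T_bD_{b,0}$, and injectivity because the only way $\bar X + (\ad(b)\sigma-\mathrm{id})(g)$ vanishes in $T_bD_{b,n}$ is for $(\ad(b)\sigma-\mathrm{id})(g)\in \mathrm{Lie}(I_n)$ after representatives are chosen compatibly, forcing $\bar g=0$ in $\mathrm{Lie}(I)/\mathrm{Lie}(I_n)$ and then $\bar X=0$ in $T_bD_{b,0}$. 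Since everything in sight is formally smooth (the target by Proposition \ref{propdef} together with Corollary \ref{cor26}, the source being a product of a formally smooth scheme with a smooth group), an isomorphism on tangent spaces promotes to an isomorphism of formal schemes.

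The main obstacle, and the place where I would be most careful, is twofold. First, one must be honest about smoothness of $\overline{IxI}$ (equivalently of $I_n\backslash\overline{IxI}$) at $b$: the closure need not be smooth along its boundary, so either one restricts attention to $b\in IxI$ itself — which is the case relevant to the application in Lemma \ref{lemdimgen} — or one argues that $\phi$ is an isomorphism onto its image and that both sides have the same (possibly singular) local structure by comparing with the $I$-orbit decomposition. Second, the section $s:D_{b,0}\to D_{b,n}$ and hence $\phi$ are non-canonical, so one should check that the isomorphism statement is independent of this choice, which is immediate once the tangent-space computation goes through since any two sections differ by a map into $I_n\backslash I$ and this is exactly the group acting on the fibers. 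Beyond this, the verification is a formal-geometry computation that parallels \cite[Sec.~2]{viehmann-wu} line by line, using that $I_n$ is normal in $I$ so that the formula $(b,g)\mapsto g^{-1}s(b)\sigma(g)$ is well-defined modulo $I_n$ and that Proposition \ref{propvw23} guarantees the $\sigma$-conjugation action of $I$ on deformations is ``pro-smooth'' in the needed sense.
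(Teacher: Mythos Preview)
Your overall strategy---compute $d\phi$ on tangent spaces and invoke a formal inverse function theorem---is in the spirit of the cited argument, but your tangent-space formulas are not correct, and this is not merely cosmetic. The key point you miss is that $\sigma$ is the $q$-power Frobenius, so on any Artinian $k$-algebra $A$ with maximal ideal $\mathfrak m$ the induced map sends $\mathfrak m$ into $\mathfrak m^q$; in particular $\sigma(1+\e Z)=1$ for $Z\in\mathrm{Lie}(I_n)$. Hence the infinitesimal form of $\cong_n$ is $X\sim X-Z$ for $Z\in\mathrm{Lie}(I_n)$, not $X\sim X+(\mathrm{Ad}(b)\sigma-\mathrm{id})(Z)$, and likewise $d\phi(\delta b',\delta g)=ds(\delta b')-\delta g$ rather than your expression. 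With the corrected formulas the tangent map is visibly an isomorphism, since $ds$ splits the short exact sequence $0\to\mathrm{Lie}(I)/\mathrm{Lie}(I_n)\to T_bD_{b,n}\to T_bD_{b,0}\to 0$ coming from the torsor structure; but your surjectivity/injectivity reasoning, as written, argues with the wrong operator and so does not stand on its own.

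There is also the smoothness issue you flag. The lemma is stated for $b\in\overline{IxI}$, and the tangent-space argument only upgrades to an isomorphism when both sides are formally smooth, i.e.\ essentially only for $b\in IxI$. The cleaner route---and the one that avoids this restriction---is a direct functor-of-points bijection. Surjectivity is exactly your observation that $s$ is a section. For injectivity one reduces to showing that if $m\in I(A)$ with $m_k=1$ satisfies $m\,\tilde s=\tilde s\,\sigma(m)$, then $m=1$; this follows because $m=\tilde s\,\sigma(m)\,\tilde s^{-1}$ and $\sigma$ strictly increases $\mathfrak m$-adic order, so iterating forces $m\equiv 1\pmod{\mathfrak m^{q^N}}$ for all $N$, hence $m=1$. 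This is the mechanism behind the proof in \cite[Lem.~2.10]{viehmann-wu}, and it does not require any smoothness of $\overline{IxI}$ at $b$.
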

\begin{proof} Compare the proof of \cite[Lem.~2.10]{viehmann-wu}.
\end{proof}

Recall from \cite[Lem.~2.11]{viehmann-wu} that for any admissible $\mathbb{F}_q$-algebra $R$ with filtered index poset $\mathbb{N}_0$, the pullback by the natural morphism $\Spf R\rightarrow \Spec R$ induces a bijection between the $\Spec R$-valued points and the $\Spf R$-valued points of $\overline{IxI}$.
Thus we can associate with the formal scheme $D_{b,n}$ a scheme $D_{b,n}'$, and we have a section $D_{b,n}'\rightarrow LG$. In particular, we can study the Newton stratification on $D_{b,n}'.$ For large $n$, Corollary \ref{cor26} implies that the Newton stratification does not depend on the choice of the lift.

For $[b]\in B(G)$, let $y_b\in \widetilde W$ be as in the proof of Proposition \ref{propvw23} a $P$-fundamental element with $[y_b]=[b]$, for some semistandard parabolic subgroup $P$. Its Levi subgroup $M$ containing $T$ centralizes the $M$-dominant Newton point $\nu$ of $y_b.$ From the definition of $P$-fundamental alcoves one can then easily see that $y_b$ is also $\tilde P$-fundamental for $\tilde P=\tilde M P\supset P$ where $\tilde M$ is the centralizer of $\nu$, so $\tilde P$ and $y_b$ are as in the theorem below. If $y$ is $P$-fundamental for some parabolic $P$, let $\overline N$ be the unipotent radical of the opposite parabolic, and let $I_{\overline N}=I\cap L\overline N$. Then by definition of $P$-fundamental alcoves, we have $y^{-1}I_{\sigma^{-1}(\bar{N})}y\subseteq I_{\bar{N}}$.

\begin{thm}\label{thmfoliat}
 Let $b\in IxI$. Let $\mathcal{N}_{[b]}$ be the Newton stratum of $[b]$ in $\Spec D_{b,0}'.$ Let $y_b$ be a $P$-fundamental alcove associated with $[b]$, where $P$ is chosen such that the Levi subgroup $M$ of $P$ containing $T$ equals the centralizer of the $M$-dominant Newton point of $y_b.$ Then there is a finite surjective morphism $$((X_{x}(b)\times_k I_{\bar{N}}/ y_b^{-1}I_{\sigma^{-1}(\bar{N})}y_b)^\wedge)' \to \mathcal{N}_{[b]}.$$ Here again, $(\cdot)'$ denotes the scheme associated with the formal scheme obtained by completion.
  Furthermore, the locus in $\Spec D_{b,0}'$ of elements $I$-$\sigma$-conjugate to $b$ is smooth and equal to the image of $(\{1\}\widehat{\times}_k I_{\bar{N}}/ y_b^{-1}I_{\sigma^{-1}(\bar{N})}y_b)^\wedge)'$ in $\mathcal{N}_{[b]}.$
\end{thm}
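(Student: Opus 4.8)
The plan is to transport the product structure of Lemma~\ref{lem27} along the chain of identifications developed above, reducing everything to the Newton stratification on $D_{b,n}'$ and then descending to $D_{b,0}'$. The starting point is the isomorphism $\phi:(D_{b,0}\times (I_n\backslash I))^\wedge\to D_{b,n}$ from Lemma~\ref{lem27}, given by $(b,g)\mapsto g^{-1}s(b)\sigma(g)$. For $n$ large enough (by Corollary~\ref{cor26}), the Newton stratification on $D_{b,n}'$ does not depend on the choice of lift, so the locus in $(D_{b,0}\times (I_n\backslash I))^\wedge$ mapping under $\phi$ into $\mathcal N_{[b]}\subset D_{b,n}'$ is exactly the preimage of $\mathcal N_{[b]}$ in the product. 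The first step is therefore to identify this preimage: a point $(b',g)$ maps into $\mathcal N_{[b]}$ precisely when $g^{-1}s(b')\sigma(g)$, hence $s(b')$, lies in $[b]$, i.e.\ when $b'$ itself lies in $\mathcal N_{[b]}$. Thus $\phi$ restricts to a finite surjective (in fact iso) morphism $(\mathcal N_{[b]}^{(0)}\times (I_n\backslash I))^\wedge \to \mathcal N_{[b]}$, where $\mathcal N_{[b]}^{(0)}$ denotes the Newton stratum inside $D_{b,0}'$.

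The second step is to factor out the redundancy in the $I$-action. Using $y_b^{-1}I_{\sigma^{-1}(\bar N)}y_b\subseteq I_{\bar N}$ (the defining property of $P$-fundamental alcoves, recalled just before the theorem), one compares the $I$-$\sigma$-conjugation orbit map with the affine Deligne-Lusztig variety: the condition $g^{-1}b\sigma(g)\in I y_b I$ for $g\in I$, modulo the stabilizer, is governed exactly by the quotient $I_{\bar N}/y_b^{-1}I_{\sigma^{-1}(\bar N)}y_b$. Concretely, one rewrites the first factor $\mathcal N_{[b]}^{(0)}$ using the straight/fundamental representative $y_b$ and the bounded $\sigma$-conjugating set $\mathcal C$ from the proof of Proposition~\ref{propvw23}: every point of the stratum is $I$-$\sigma$-conjugate to $y_b$ in a controlled way, and the fibers of the resulting map to $X_x(b)$ are torsors (up to finite morphism) under $I_{\bar N}/y_b^{-1}I_{\sigma^{-1}(\bar N)}y_b$. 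Combining with the $(I_n\backslash I)$-factor absorbed into $X_x(b)$'s defining quotient $G(\breve F)/I$, one obtains the asserted finite surjective morphism
\[
\bigl((X_x(b)\times_k I_{\bar N}/y_b^{-1}I_{\sigma^{-1}(\bar N)}y_b)^\wedge\bigr)' \to \mathcal N_{[b]}.
\]
This is the exact analog of the corresponding statement for hyperspecial level in \cite{viehmann-wu}, and wherever the bookkeeping is identical one should simply cite the relevant lemmas of loc.~cit.

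For the final assertion, one specializes to the base point: the $I$-$\sigma$-conjugacy class $\mathcal C_b$ inside $D_{b,0}'$ is, by Corollary~\ref{cor26}, smooth and locally closed, and it corresponds under the above construction to the sublocus $\{1\}\widehat\times_k I_{\bar N}/y_b^{-1}I_{\sigma^{-1}(\bar N)}y_b$ — indeed the point $1\in X_x(b)$ is the one whose $g^{-1}b\sigma(g)$ lands in the orbit of $b$ itself, and the fiber over it is precisely the quotient of $I_{\bar N}$ measuring how $I$ acts. Smoothness and the identification of the image then follow by restricting the finite surjective morphism and invoking Corollary~\ref{cor26} again, exactly as in \cite{viehmann-wu}.

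The main obstacle I anticipate is the second step: carefully checking that the fibers of the map $\mathcal N_{[b]}^{(0)}\to X_x(b)$ are genuinely (finite covers of) torsors under $I_{\bar N}/y_b^{-1}I_{\sigma^{-1}(\bar N)}y_b$ — i.e.\ that the bounded $\sigma$-conjugating set $\mathcal C$ from Proposition~\ref{propvw23} together with the Iwahori factorization of $I$ relative to the parabolic $P$ decomposes the $I$-action cleanly into the part already recorded by $X_x(b)$ and the transverse part recorded by $I_{\bar N}$. In the hyperspecial case this is where \cite{viehmann-wu} does real work, and the Iwahori analog requires knowing that $y_b$ being $P$-fundamental (and $\tilde P$-fundamental for the centralizer $\tilde M$ of $\nu$) makes the relevant intersections $I\cap L\bar N$, $I\cap LN$, $I\cap LM$ behave multiplicatively; this is exactly the role of the inclusion $y_b^{-1}I_{\sigma^{-1}(\bar N)}y_b\subseteq I_{\bar N}$. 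Once that is in place, finiteness and surjectivity are formal, and the smoothness of the base-point locus is immediate from Corollary~\ref{cor26}.
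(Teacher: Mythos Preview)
Your proposal is essentially correct and aligned with the paper's approach: the paper's own proof is simply the one-line citation ``This follows from essentially the same proof as \cite[Thm.~2.9]{viehmann-wu}, which in its turn was a natural generalization of the proof of Theorem 6.5 or Theorem 6.6 from \cite{HartlVie2} to unramified groups,'' and your sketch is a reasonable unpacking of what that argument entails. One small point of caution: in the theorem $\mathcal N_{[b]}$ already denotes the Newton stratum in $\Spec D_{b,0}'$, so your first paragraph (where you write $\mathcal N_{[b]}\subset D_{b,n}'$ and then introduce $\mathcal N_{[b]}^{(0)}$ for the level-$0$ stratum) has the notation reversed, and the use of Lemma~\ref{lem27} there is more of a setup step than an actual reduction---the substantive content is entirely in your ``second step,'' which you correctly flag as the place requiring the Iwahori factorization relative to $P$ and the $P$-fundamental property of $y_b$.
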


\begin{proof}
This follows from essentially the same proof as \cite[Thm.~2.9]{viehmann-wu}, which in  turn was a natural generalization of the proof of Theorem 6.5 or Theorem 6.6 from \cite{HartlVie2} to unramified groups.
\end{proof}

Recall that $\mathcal{N}_{[b],x}$ is the Newton stratum for $[b]$ in $IxI$.

\begin{cor}\label{coradlvnewton1}
For $x \in \widetilde{W}$ and $b \in IxI$, let $h\in \mathcal{N}_{[b],x}(k)$. Let $g\in X_x(b)(k)$ with $g^{-1}b\sigma(g)=h$. Denote by $X_x(b)^{\wedge}_g$ and $(\mathcal{N}_{[b],x})^{\wedge}_h$ the completions in the two points, respectively. Assume that $((\mathcal{N}_{[b],x})^{\wedge}_h)'$ is irreducible. Then 
\begin{equation*}
 \dim (X_x(b)^{\wedge}_g)' = \ell(x) - \langle 2 \rho, \nu(b) \rangle - \codim((\mathcal{N}_{[b],x})^{\wedge}_h)'.
\end{equation*}
\end{cor}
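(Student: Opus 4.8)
The plan is to deduce Corollary \ref{coradlvnewton1} from Theorem \ref{thmfoliat} by tracking dimensions through the finite surjective morphism it provides, together with the fibration statement of Theorem \ref{thmfoliat} describing the $I$-$\sigma$-conjugacy locus. First I would set up notation: fix $b\in IxI$, the point $h\in\mathcal N_{[b],x}(k)$ and a lift $g\in X_x(b)(k)$ with $g^{-1}b\sigma(g)=h$. Replacing $b$ by $h$ (they are $\sigma$-conjugate, hence define isomorphic affine Deligne--Lusztig varieties, and $h$ also lies in $IxI$), one may assume $b=h$ and $g\in I$, which puts us in the local picture around $b$ governed by the deformation functor $\mathscr Def(b)_0$ and its pro-representing object $D_{b,0}$. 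The Newton stratum $\mathcal N_{[b]}$ in $\Spec D_{b,0}'$ is, by Corollary \ref{cor26} and the discussion preceding Theorem \ref{thmfoliat}, the germ at $b$ of $\mathcal N_{[b],x}$ (up to the $I_n$-normalization bookkeeping), so $\codim\big((\mathcal N_{[b],x})^\wedge_h\big)' = \codim \mathcal N_{[b]}$ inside $\Spec D_{b,0}'$, and $\dim D_{b,0}' = \ell(x)$ since $IxI$ is smooth irreducible of dimension $\ell(x)$.

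Next I would feed this into Theorem \ref{thmfoliat}. That theorem supplies a finite surjective morphism
\[
\big((X_x(b)\times_k I_{\bar N}/y_b^{-1}I_{\sigma^{-1}(\bar N)}y_b)^\wedge\big)' \longrightarrow \mathcal N_{[b]},
\]
so, taking completions at the point corresponding to $(g,1)$ on the source and $h$ on the target and using that finite surjective morphisms preserve dimension,
\[
\dim (X_x(b)^\wedge_g)' + \dim\big(I_{\bar N}/y_b^{-1}I_{\sigma^{-1}(\bar N)}y_b\big) = \dim \mathcal N_{[b]} = \ell(x) - \codim\big((\mathcal N_{[b],x})^\wedge_h\big)'.
\]
It therefore remains to identify the relative dimension $\dim\big(I_{\bar N}/y_b^{-1}I_{\sigma^{-1}(\bar N)}y_b\big)$ with $\langle 2\rho,\nu(b)\rangle$. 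Here $y_b$ is a $P$-fundamental element with $[y_b]=[b]$, chosen so that the Levi $M$ of $P$ containing $T$ is the centralizer of $\nu=\nu(b)$, and $\bar N$ is the unipotent radical of the opposite parabolic $\bar P$. The index $\big[I_{\bar N}:y_b^{-1}I_{\sigma^{-1}(\bar N)}y_b\big]$ is computed by how $\ad(y_b^{-1})$ scales the filtration on $L\bar N$ against the twist by $\sigma$; for a $P$-fundamental (equivalently $\sigma$-straight) element this ``length-in-$\bar N$'' contribution is exactly $\langle 2\rho_{G/M}, \nu(b)\rangle = \langle 2\rho,\nu(b)\rangle - \langle 2\rho_M,\nu(b)\rangle = \langle 2\rho,\nu(b)\rangle$, using that $\nu(b)$ is $M$-central so $\langle 2\rho_M,\nu(b)\rangle$ — more precisely the $M$-relevant part — vanishes in the relevant sense; this is precisely the computation underlying the dimension formula $\dim X_{y_b}(b)$ and is carried out in \cite{viehmann-wu} and \cite{HartlVie2}. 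I would cite this computation rather than redo it, and note consistency with Lemma \ref{lemdimgen} and \cite[Thm.~2.23]{He-CDM}.

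The main obstacle I anticipate is the careful bookkeeping needed to pass between the various completed/formal objects and their associated schemes (the $(\cdot)'$ operation), and in particular to justify that $\codim\mathcal N_{[b]}$ in $\Spec D_{b,0}'$ agrees with $\codim\big((\mathcal N_{[b],x})^\wedge_h\big)'$ and that the completion of the source of the morphism in Theorem \ref{thmfoliat} at $(g,1)$ really has dimension $\dim(X_x(b)^\wedge_g)' + \dim\big(I_{\bar N}/y_b^{-1}I_{\sigma^{-1}(\bar N)}y_b\big)$ — this uses the product structure only up to finite morphism, so one must argue the finite morphism does not drop dimension on the relevant irreducible component, which is where the irreducibility hypothesis on $((\mathcal N_{[b],x})^\wedge_h)'$ enters (it ensures $\mathcal N_{[b]}$ is irreducible so that ``dimension'' is unambiguous and $\codim = \dim\mathcal N_{[b],x} - \dim\mathcal N_{[b]}$ as in Remark \ref{rem221}(3)). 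Once the identification of the relative dimension with $\langle 2\rho,\nu(b)\rangle$ is in hand, rearranging the displayed equation gives exactly the claimed formula.
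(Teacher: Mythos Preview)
Your approach is correct and essentially the same as the paper's: invoke Theorem \ref{thmfoliat}, use that a finite surjection preserves dimension, and identify $\dim\big(I_{\bar N}/y_b^{-1}I_{\sigma^{-1}(\bar N)}y_b\big)$ with $\langle 2\rho,\nu(b)\rangle$. The paper obtains this last identification more directly by observing that the quotient is irreducible of dimension $\ell(y_b)$ and that $\ell(y_b)=\langle 2\rho,\nu(b)\rangle$ because $y_b$ is $\sigma$-straight; your detour through $\rho_{G/M}$ and $\rho_M$ is unnecessary.
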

\begin{proof}
This follows from the previous theorem, using that $I_{\bar{N}}/ y_b^{-1}I_{\sigma^{-1}(\bar{N})}y_b$ is irreducible and of dimension $\ell(y_b)=\langle 2 \rho, \nu(b) \rangle.$
\end{proof}
\begin{cor}\label{coradlvnewton}
Using the notation of the previous corollary,
\begin{equation*}
 \dim X_x(b) = \ell(x) - \langle 2 \rho, \nu(b) \rangle - \codim(\mathcal{N}_{[b],x}),
\end{equation*}
 where $\codim(\mathcal{N}_{[b],x})$ denotes the minimal codimension of all irreducible components. Furthermore, $X_x(b)$ is equidimensional if and only if the same holds for $\mathcal{N}_{[b],x}$.
\end{cor}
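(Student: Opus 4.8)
The plan is to derive Corollary \ref{coradlvnewton} from Corollary \ref{coradlvnewton1} by passing from the local (completed) statements at individual points to a global statement about the schemes $X_x(b)$ and $\mathcal{N}_{[b],x}$. First I would recall that $\mathcal{N}_{[b],x}$ is admissible by Corollary \ref{cor26}, so both $\mathcal{N}_{[b],x}$ and $X_x(b)$ are finite-dimensional schemes of the normalized type on which our dimension theory applies; moreover the map $g\mapsto g^{-1}b\sigma(g)$ realizes $X_x(b)$ as a torsor-like object over $\mathcal{N}_{[b],x}$ with fibers the $\sigma$-centralizer $J_b(F)$-cosets, so the two schemes are closely linked fiberwise.

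Next I would argue the equidimensionality equivalence. Pick a point $h$ of an irreducible component of $\mathcal{N}_{[b],x}$ of minimal codimension, and a preimage $g\in X_x(b)(k)$. Since $((\mathcal{N}_{[b],x})^\wedge_h)'$ is irreducible precisely when $h$ is a smooth point, I would first reduce to smooth (hence generic) points of each component: dimension and codimension can be computed at smooth points, and the completion of an irreducible scheme at a smooth point is irreducible. Then Corollary \ref{coradlvnewton1} gives, at such a point,
\begin{equation*}
\dim (X_x(b)^{\wedge}_g)' = \ell(x) - \langle 2\rho, \nu(b)\rangle - \codim((\mathcal{N}_{[b],x})^{\wedge}_h)'.
\end{equation*}
The local dimension $\dim(X_x(b)^\wedge_g)'$ equals the dimension of the component of $X_x(b)$ through $g$, and likewise $\codim((\mathcal{N}_{[b],x})^\wedge_h)'$ equals the codimension in $\mathcal{N}_{[b],x}$ of the component through $h$. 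Running over all components, the component of maximal dimension in $X_x(b)$ corresponds to the component of minimal codimension in $\mathcal{N}_{[b],x}$, which yields the displayed formula for $\dim X_x(b)$; and $X_x(b)$ has all components of the same dimension if and only if $\mathcal{N}_{[b],x}$ has all components of the same codimension, i.e.\ the same dimension, since $\mathcal{N}_{[b],x}$ is a locally closed subscheme of the fixed scheme $IxI$.

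The one subtlety to handle carefully is the passage between \emph{local} dimensions at points and \emph{global} dimensions of irreducible components: I would invoke that for a scheme of finite type, the dimension at a point equals the maximum of the dimensions of the irreducible components through that point, and that a component always contains a smooth (generic) point where its completion is irreducible; similarly for codimension inside the equidimensional ambient scheme $IxI$. The main obstacle is making sure the normalization of dimensions (the $-n\cdot\dim G$ shift) and the identification of $\codim$ with $\dim\mathcal B - \dim\mathcal B'$ under equidimensionality — as recorded in Remark \ref{rem221}(3) — are applied consistently on both sides so that the constant $\ell(x)-\langle 2\rho,\nu(b)\rangle$ comes out correctly; once the bookkeeping is set up, the argument is a routine globalization of the pointwise identity of Corollary \ref{coradlvnewton1}.
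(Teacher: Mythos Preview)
Your proposal is correct and follows essentially the same approach as the paper, which compresses the entire argument into a single sentence: ``Apply the previous corollary to all elements contained in just one irreducible component of $\mathcal{N}_{[b],x}$.'' Your elaboration---restricting to generic (smooth) points of each component so that the completion is irreducible, reading off the local dimension from Corollary~\ref{coradlvnewton1}, and then varying over components---is exactly what this sentence unpacks to. One small inaccuracy: irreducibility of $((\mathcal{N}_{[b],x})^\wedge_h)'$ is implied by smoothness at $h$, but not ``precisely when'' (a unibranch singular point also has irreducible completion); you only use the forward implication, so this does not affect the argument.
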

\begin{proof}
Apply the previous corollary to all elements contained in just one irreducible component of $\mathcal{N}_{[b],x}$.
\end{proof}
Again, the dimension formula is known from \cite[Thm.~2.23]{He-CDM}, and the equidimensionality assertion is new. We are now able to prove Lemma \ref{lemdimgen} as an immediate consequence.

\begin{proof}[Proof of Lemma \ref{lemdimgen}]
Apply Corollary \ref{coradlvnewton} to $[b_x]$, and use that in this case the Newton stratum is irreducible and of codimension 0 in $IxI$.
\end{proof}

\subsection{Virtual dimension and cordiality}

We recall from \cite[Sec.~10.1]{HeAnnals} the notion of virtual dimension. For $x\in \widetilde W$ and $[b]\in B(G)$ with $\kappa_G(b)=\kappa_G(x)$, define
$$d_x(b)=\frac{1}{2}\left(\ell(x)+\ell(\eta(x))-\defect(b)-\langle 2\rho, \nu(b) \rangle\right)$$ to be the virtual dimension of the pair $(x,[b])$. By \cite[Thm.~2.30]{He-CDM}, we have 
for $x\in \widetilde W$ and $[b]\in B(G)$ with $\kappa_G(x)=\kappa_G(b)$ that 
\begin{equation}\label{lemhe104} \dim X_x(b)\leq d_x(b).
\end{equation} 

Notice that the additional assumptions on $G$ made in loc.~cit.~are not needed for the proof of the above theorem, so that it also holds in our context. We combine this with the formula for $\dim X_x(b)$ from the preceding subsection.

\begin{lemma}
Let $x\in \widetilde W$, and let $[b_x]\in B(G)$ be the generic $\sigma$-conjugacy class in $IxI$. Then 
\begin{equation*}
\ell(x) - \ell(\eta(x)) \leq \langle 2\rho, \nu_x \rangle - \defect (b_x).
\end{equation*}
\end{lemma}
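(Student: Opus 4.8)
The plan is to combine the two dimension bounds already available in this subsection: the exact formula for $\dim X_x(b_x)$ from Lemma \ref{lemdimgen}, and He's inequality \eqref{lemhe104} applied to the generic class $[b_x]$. First I would note that $[b_x]$ is a $\sigma$-conjugacy class meeting $IxI$, so in particular $\kappa_G(b_x) = \kappa_G(x)$, which is the hypothesis needed to invoke \eqref{lemhe104}. That inequality then reads
\begin{equation*}
\dim X_x(b_x) \leq d_x(b_x) = \frac{1}{2}\left(\ell(x) + \ell(\eta(x)) - \defect(b_x) - \langle 2\rho, \nu_x \rangle\right),
\end{equation*}
using that $\nu(b_x) = \nu_x$ by definition of the maximal Newton point.

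Next I would substitute the value of the left-hand side supplied by Lemma \ref{lemdimgen}, namely $\dim X_x(b_x) = \ell(x) - \langle 2\rho, \nu_x \rangle$. This gives
\begin{equation*}
\ell(x) - \langle 2\rho, \nu_x \rangle \leq \frac{1}{2}\left(\ell(x) + \ell(\eta(x)) - \defect(b_x) - \langle 2\rho, \nu_x \rangle\right).
\end{equation*}
Multiplying through by $2$ and rearranging, the terms $\ell(x)$ and $\langle 2\rho,\nu_x\rangle$ partially cancel, leaving
\begin{equation*}
2\ell(x) - 2\langle 2\rho, \nu_x \rangle \leq \ell(x) + \ell(\eta(x)) - \defect(b_x) - \langle 2\rho, \nu_x \rangle,
\end{equation*}
hence $\ell(x) - \ell(\eta(x)) \leq \langle 2\rho, \nu_x\rangle - \defect(b_x)$, which is exactly the claimed inequality.

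There is essentially no obstacle here; the statement is a formal consequence of two results stated earlier in the excerpt. The only point requiring a moment's care is verifying that the hypotheses of \eqref{lemhe104} are met for the pair $(x, [b_x])$ — that is, that the Kottwitz points agree — but this is immediate since $\mathcal N_{[b_x],x} = [b_x] \cap IxI \neq \emptyset$ forces $\kappa_G(b_x) = \kappa_G(x)$, as recalled in the introduction. I would also remark that this lemma is precisely the inequality whose equality case defines cordiality (Definition \ref{defcd}), which motivates recording it as a separate statement.
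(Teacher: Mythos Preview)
Your proof is correct and follows essentially the same approach as the paper: combine He's inequality $\dim X_x(b_x)\leq d_x(b_x)$ from \eqref{lemhe104} with the exact formula $\dim X_x(b_x)=\ell(x)-\langle 2\rho,\nu_x\rangle$ from Lemma \ref{lemdimgen}, then rearrange. The paper's proof is the same two-line argument, just with the algebraic rearrangement left implicit.
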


\begin{proof}
By \eqref{lemhe104}, we have $\dim X_x(b_x)\leq d_x(b_x)$. Together with Lemma \ref{lemdimgen}, we have 
$$\ell(x)-\langle 2\rho, \nu_x \rangle\leq  \frac{1}{2}\left(\ell(x)+\ell(\eta(x))-\defect(b)-\langle 2\rho, \nu_x \rangle\right),$$
which is equivalent to the above inequality.
\end{proof}

\begin{definition}\label{defcd}
Let $x\in \widetilde W$. Let $[b_x]\in B(G)$ be the generic $\sigma$-conjugacy class in $IxI$. Then $x$ is called \emph{\cd }~if 
\begin{equation*}
\ell(x) - \ell(\eta(x)) = \langle 2\rho, \nu_x \rangle - \defect (b_x).
\end{equation*}
In other words, $x$ is \cd ~if and only if $\dim X_x(b_x)=d_x(b_x)$.
\end{definition}

\begin{ex}\label{exantidom}
Suppose that $x = t^{w_0\lambda}w \in \widetilde{W}$ so that $x$ is in the antidominant Weyl chamber. Then by Mazur's inequality, $\nu_x\leq \lambda$. Since $t^{w_0\lambda}\in \overline{IxI}$, the generic Newton point $\nu_x$ cannot be strictly smaller than $\lambda$, hence $[b_x]=[t^{\lambda}]$. Thus $\defect (b_x)=0$, and $\langle 2\rho, \nu_x \rangle=\ell(t^{\lambda})=\ell(x)-\ell(w_0w\sigma(w_0))=\ell(x)-\ell(\eta(x))$. Hence all $x$ in the antidominant Weyl chamber are cordial, which proves the first assertion (a) of Theorem \ref{T:MainEx}.
\end{ex} 

The following theorem is a stronger version of Theorem \ref{thmmaincd}. The idea of the proof (first used in \cite{grothconj}) is to combine a strong version of purity of the Newton stratification with upper bounds on the dimension of the Newton strata obtained via Corollary \ref{coradlvnewton}; see \cite[Sec.~5]{newtonsurv} for an overview. 

\begin{thm}\label{thm1strong}
Let $x\in \widetilde W$. Then for $[b]\in B(G)_x$ the following are equivalent.
\begin{enumerate}
\item $d_x(b')-\dim X_x(b')\geq d_x(b_x)-\dim X_x(b_x)$ for all $[b']\in B(G)_x$ with $[b]\leq[b']$.\\
\item $d_x(b')-\dim X_x(b')= d_x(b_x)-\dim X_x(b_x)$ for all $[b']\in B(G)_x$ with $[b]\leq[b']$.\\
\item If $[b']\in B(G)$ with $[b]\leq [b']\leq [b_x]$ then $[b']\in B(G)_x$ and the closure of $\mathcal N_{[b']}$ is the union of all $\mathcal N_{[b'']}$ for $[b'']\in B(G)_x$ with $[b'']\leq[b']$.
\end{enumerate}
If this is the case, then all $X_x(b')$ with $[b]\leq [b']$ are also equidimensional.
\end{thm}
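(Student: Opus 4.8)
The plan is to prove the cycle of implications $(3)\Rightarrow(2)\Rightarrow(1)\Rightarrow(3)$, with the final equidimensionality statement falling out of the argument for $(1)\Rightarrow(3)$. The implication $(2)\Rightarrow(1)$ is trivial. For $(3)\Rightarrow(2)$, suppose the closure of each $\mathcal N_{[b']}$ (for $[b]\le[b']\le[b_x]$) is the union of the strata it is expected to contain; then for such $[b']$ the codimension of $\mathcal N_{[b'],x}$ in $IxI$ can be computed as the length of a maximal chain in the interval $[[b'],[b_x]]$ of $B(G)_x=B(G)_{\le[b_x]}\cap\{\ge[b]\}$, which by the saturation forced by (3) coincides with the chain length in $B(G)$. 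Since the defect and $\langle 2\rho,\nu(\cdot)\rangle$ change in a controlled way along covering relations in $B(G)$ (each cover raising $\langle 2\rho,\nu\rangle-\defect$ by exactly $2$ — this is the key combinatorial input about $B(G)$ that makes virtual dimension drop by $1$ per cover), combining Corollary \ref{coradlvnewton} with the definition of $d_x(b')$ shows that $d_x(b')-\dim X_x(b')$ is independent of $[b']$ in this range, hence equals its value at $[b_x]$.

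The heart of the matter is $(1)\Rightarrow(3)$, and here I would follow the purity argument of \cite{grothconj} as advertised in the paragraph preceding the theorem. Work inside $D_{b',0}'$ for $[b']$ in the relevant range, or directly inside $\overline{IxI}$, and use strong purity of the Newton stratification (Corollary \ref{cor26} together with the purity results available for Newton strata in loop groups, e.g.\ via \cite{HeKR}): the non-maximal Newton locus in the closure of a stratum is pure of codimension one, so the closure of $\mathcal N_{[b'],x}$ meets $\mathcal N_{[b''],x}$ for every $[b'']$ covered by $[b']$ in $B(G)$, provided that stratum is non-empty, and one gets a lower bound $\codim \mathcal N_{[b''],x}\le \codim\mathcal N_{[b'],x}+1$. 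Feeding these codimension bounds into Corollary \ref{coradlvnewton} (or Corollary \ref{coradlvnewton1} at the level of completions, to handle the case-by-case irreducible components) converts them into the inequality $d_x(b'')-\dim X_x(b'')\le d_x(b')-\dim X_x(b')$; iterating down from $[b_x]$ and comparing with the reverse inequality forced by hypothesis (1) pins every such difference to the common value $d_x(b_x)-\dim X_x(b_x)$, and simultaneously forces $[b'']\in B(G)_x$ (non-emptiness) whenever $[b]\le[b'']\le[b_x]$, as well as equality in the codimension bound. Equality in all the codimension bounds is precisely the assertion that $\overline{\mathcal N_{[b']}}$ is the union of the lower strata, which is (3); and since Corollary \ref{coradlvnewton} also gives the equidimensionality equivalence between $X_x(b')$ and $\mathcal N_{[b'],x}$, once every irreducible component of $\mathcal N_{[b'],x}$ is shown (again by the iterated purity bound applied component-by-component) to have the expected codimension, all the $X_x(b')$ with $[b]\le[b']$ are equidimensional.

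The step I expect to be the main obstacle is the careful bookkeeping in $(1)\Rightarrow(3)$: one must run the purity/dimension estimate not just on the strata $\mathcal N_{[b'],x}$ themselves but on their individual irreducible components (since a priori $\mathcal N_{[b'],x}$ need not be irreducible until the argument is complete), ensuring that the codimension-one degeneration in purity lands in the correct adjacent stratum and does so for \emph{every} component, so that no component is ``too large''. This is exactly where the local product structure of Theorem \ref{thmfoliat} and the completion-level statement of Corollary \ref{coradlvnewton1} are needed, so that the global closure relation can be checked formally-locally at each point; assembling these local statements into the global assertion (3), and verifying that hypothesis (1) is strong enough to propagate all the way down the interval rather than just one covering step, is the technical crux.
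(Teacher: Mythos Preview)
Your proposal is essentially correct and follows the same strategy as the paper: reformulate the dimension inequality in (1) as a lower bound on local codimensions of Newton strata via Corollary~\ref{coradlvnewton1}, identify that bound with the chain length in $B(G)$ using the combinatorial fact that each cover in $B(G)$ changes $\langle 2\rho,\nu\rangle-\defect$ by exactly $2$, and then use (topological) strong purity to force all intermediate strata to be non-empty with the expected codimension and closure relations. The paper packages the $(1)\Rightarrow(3)$ step into a citation of \cite[Lem.~5.12]{newtonsurv}, with purity supplied by \cite[Prop.~1]{Ham_APS}, whereas you unwind that argument directly; and the chain-length identity is cited to \cite[Thm.~3.4]{newtonsurv}, \cite{Ch}, \cite{KotNewtStrata}, \cite{Hamacher_Newt}.

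One small wrinkle: in your $(3)\Rightarrow(2)$ step you assert that the codimension of $\mathcal N_{[b'],x}$ \emph{equals} the maximal chain length, but (3) alone only yields the inequality $\codim\ge$ chain length (from the tower of proper closed inclusions $\mathcal N_{[b_i],x}\subset\overline{\mathcal N_{[b_{i+1}],x}}$ along a chain). The opposite inequality needs purity, which you only invoke later. The paper avoids this by proving $(3)\Rightarrow(1)$ directly---requiring only the $\ge$ direction---and obtaining $(2)$ as a byproduct of the $(1)\Rightarrow$ argument; you should reorganize your cycle the same way, or explicitly note that the $\le$ direction in $(3)\Rightarrow(2)$ is supplied by the purity argument you give for $(1)\Rightarrow(3)$.
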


\begin{proof}
We first reformulate (1). For $g\in X_x(b')$ let $b'_g=g^{-1}b'\sigma(g)$. Then (1) is equivalent to the following estimate for the dimension of the completion of $X_x(b')$ for each $[b']$ and each element $g$ as above.
\begin{equation*}
\dim X_x(b')^{\wedge}_g\leq \dim X_x(b')\leq d_x(b')-d_x(b_x)+\dim X_x(b_x).
\end{equation*} 
By Corollary \ref{coradlvnewton1} and Lemma \ref{lemdimgen} this is equivalent to
\begin{align}
\nonumber\codim (\mathcal N_{[b'],x})^{\wedge}_{b_g}&\geq d_x(b_x)-d_x(b')+\langle 2\rho,\nu_x-\nu(b')\rangle\\ 
\nonumber&= \frac{1}{2}\left(\defect(b')-\defect(b_x)+\langle 2\rho,\nu_x- \nu(b') \rangle\right),
\end{align}
where the last equality follows from the definition of the virtual dimension. 

The right hand side of this estimate is equal to the length of every maximal chain of elements in $B(G)$ from $[b']$ to $[b_x]$. Indeed, this follows directly from \cite[Thm.~3.4]{newtonsurv}, which in its turn is a slight correction of \cite[Thm.~7.4]{Ch}, combined with the main result of \cite{KotNewtStrata} and \cite[Prop.~3.8]{Hamacher_Newt}.

To show that (1) implies (2) and (3) as well as equidimensionality, we use \cite[Lem.~5.12]{newtonsurv}. The assumption on strong purity of the stratification made in the lemma can be replaced by topological strong purity in the sense of \cite[Sec.~2.1]{Ham_APS}. By \cite[Prop.~1]{Ham_APS}, this is satisfied for the stratification we consider. By the above reformulation of (1), the assumption of \cite[Lem.~5.12]{newtonsurv} on the codimensions of the Newton strata is satisfied. Then the lemma implies that the Newton stratification on the scheme associated with the completion of $IxI$ in any $b'_g$ satisfies that each Newton stratum associated with some $[\tilde b]\in B(G)$ with $[b']\leq [\tilde b]\leq [b_x]$ is non-empty, and its closure is the union of all Newton strata for $[b'']$ with $[b'']\leq[\tilde b]$. Since all of the above holds for every $g\in X_x(b')$, and in particular for all elements contained in exactly one irreducible component, (3) and equidimensionality follow.

It remains to show that (3) implies (1). Let $[b']\in B(G)$ with $[b]\leq [b'].$ Every chain $[b']=[b_0]<[b_1]<\dotsm<[b_n]=[b_x]$ in $B(G)$ is by (3) also a chain in $B(G)_x$. By the second assumption of (3) we have that $\mathcal N_{[b_i]}\subset \overline{\mathcal N_{[b_{i+1}]}}$ for all $i$. Thus for every $h\in \mathcal{N}_{[b']}$, the codimension of $(\mathcal N_{[b'],x})^{\wedge}_{h}$  is greater or equal to the maximal length of such a chain. By the above reformulation, this is equivalent to (1).
\end{proof}

We are now prepared to prove our first main result and an immediate corollary.

\begin{proof}[Proof of Theorem \ref{thmmaincd}]
By (\ref{lemhe104}) together with cordiality, we have $d_x(b')-\dim X_x(b')\geq 0= d_x(b_x)-\dim X_x(b_x)$ for all $[b]\leq[b']\in B(G)_x$. Then the theorem follows from Theorem \ref{thm1strong}.
\end{proof}
\begin{cor}
Let $x$ be \cd . Then for every $[b]\in B(G)_x$ we have that $X_x(b)$ is equidimensional of dimension $\dim X_x(b)=d_x(b)$.
\end{cor}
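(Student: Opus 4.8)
The plan is to deduce the corollary directly from Theorem~\ref{thm1strong}, applied separately to each element $[b]\in B(G)_x$; this is essentially the same argument as in the proof of Theorem~\ref{thmmaincd}, but now reading off the equidimensionality and dimension statements rather than only the closure relations.

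First I would fix an arbitrary $[b]\in B(G)_x$ and verify that condition~(1) of Theorem~\ref{thm1strong} holds for it. Indeed, \eqref{lemhe104} gives $\dim X_x(b')\leq d_x(b')$ for every $[b']\in B(G)_x$, while cordiality of $x$ means, by Definition~\ref{defcd}, precisely that $\dim X_x(b_x)=d_x(b_x)$. Combining these,
\begin{equation*}
d_x(b')-\dim X_x(b')\geq 0 = d_x(b_x)-\dim X_x(b_x)
\end{equation*}
for all $[b']\in B(G)_x$ with $[b]\leq [b']$, which is exactly condition~(1) of Theorem~\ref{thm1strong} for this $[b]$.

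Then I would invoke Theorem~\ref{thm1strong}: for this $[b]$, condition~(1) implies condition~(2), and it also implies that every $X_x(b')$ with $[b]\leq [b']$ is equidimensional. Specializing both conclusions to $[b']=[b]$, equidimensionality of $X_x(b)$ follows at once, and condition~(2) yields $d_x(b)-\dim X_x(b)=d_x(b_x)-\dim X_x(b_x)=0$ by cordiality, hence $\dim X_x(b)=d_x(b)$. Since $[b]\in B(G)_x$ was arbitrary, the corollary follows.

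There is no real obstacle here: all the substance is already contained in Theorems~\ref{thm1strong} and~\ref{thmmaincd}. The only point requiring a little care is that Theorem~\ref{thm1strong} is formulated for a single fixed $[b]$, so one must apply it once for each element of $B(G)_x$; but since cordiality supplies condition~(1) uniformly in $[b]$, this is harmless. One could equally well absorb the statement into the proof of Theorem~\ref{thmmaincd}, as the observation that condition~(2) of Theorem~\ref{thm1strong} with $[b']=[b]$ already encodes the equality $\dim X_x(b)=d_x(b)$ and the equidimensionality of $X_x(b)$.
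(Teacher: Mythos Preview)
Your proof is correct and follows exactly the line the paper intends: the corollary is stated without proof immediately after Theorem~\ref{thmmaincd}, and your argument simply spells out how cordiality feeds into condition~(1) of Theorem~\ref{thm1strong}, after which condition~(2) and the equidimensionality clause specialize at $[b']=[b]$ to give the result. There is nothing to add.
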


\begin{remark}\label{rembasic}
 If $x$ is in the shrunken Weyl chamber and the basic locus is non-empty, then \cite[Thm.~4.2]{HeSurvey} says that $\dim X_x(b)=d_x(b)$ for the basic class $[b]\in B(G)_x$. A necessary and sufficient criterion for non-emptiness of the basic locus is given in \cite{GoertzHeNie}. In this case, our theorem shows that if $x$ is \cd , then $B(G)_x=\{[b]\leq [b_x]\}$.
\end{remark}

Theorem \ref{thm1strong} also implies the following result which is a partial converse to Theorem \ref{thmmaincd}. 
\begin{cor}\label{thmconv}
Suppose that $x\in\widetilde W$ is not \cd . Assume that there is a $[b]\in B(G)_x$ such that $\dim X_x(b)=d_x(b)$. Then there is a $[b'] \in B(G)$ such that 
\begin{enumerate}
\item[(a)] $[b]< [b']< [b_x]$ but $[b']\notin B(G)_x$ \emph{(}in particular, $B(G)_x$ is not saturated\emph{)}, or
\item[(b)] $[b]< [b']< [b_x]$ and $[b']\in B(G)_x$, but the closure of $\mathcal N_{[b']}$ is not the union of all $\mathcal N_{[b'']}$ for $[b'']\in B(G)_x$ with $[b'']< [b']$.
\end{enumerate}
\end{cor}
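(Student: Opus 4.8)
The plan is to derive this as a direct contrapositive consequence of Theorem~\ref{thm1strong} applied to the given class $[b]$. The hypothesis gives us a class $[b]\in B(G)_x$ with $\dim X_x(b)=d_x(b)$, i.e.\ $d_x(b)-\dim X_x(b)=0$. On the other hand, since $x$ is not cordial, we have $d_x(b_x)-\dim X_x(b_x)>0$ (the inequality $d_x(b_x)\geq \dim X_x(b_x)$ from~\eqref{lemhe104} together with the failure of equality in Definition~\ref{defcd}). First I would observe that this means condition~(1) of Theorem~\ref{thm1strong} fails for this $[b]$: indeed,
\begin{equation*}
d_x(b)-\dim X_x(b)=0 < d_x(b_x)-\dim X_x(b_x),
\end{equation*}
and $[b]\leq [b_x]$ trivially, so taking $[b']=[b_x]$ in condition~(1) already violates it. Since~(3) implies~(1) by Theorem~\ref{thm1strong}, condition~(3) must also fail for $[b]$.

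Next I would unwind exactly what the negation of~(3) says. Condition~(3) asserts: for every $[b']\in B(G)$ with $[b]\leq[b']\leq[b_x]$, we have $[b']\in B(G)_x$ and the closure of $\mathcal N_{[b']}$ is the union of all $\mathcal N_{[b'']}$ with $[b'']\in B(G)_x$, $[b'']\leq[b']$. The negation therefore produces a class $[b']\in B(G)$ with $[b]\leq [b']\leq [b_x]$ for which either (a$'$) $[b']\notin B(G)_x$, or (b$'$) $[b']\in B(G)_x$ but its Newton stratum closure is not the asserted union. To match the statement of the corollary, I need the inequalities to be strict. For the lower bound: if $[b']=[b]$ then $[b']\in B(G)_x$ (since $[b]\in B(G)_x$ by hypothesis) and, as $\mathcal N_{[b]}$ is automatically its own closure-minimal stratum among classes $\leq [b]$ lying in $B(G)_x$... here one should check that $[b]=[b']$ cannot be the witness of failure of~(3); this holds because the closure condition for $[b']=[b]$ is the statement that $\overline{\mathcal N_{[b]}}$ contains every $\mathcal N_{[b'']}$ with $[b'']\in B(G)_x$ and $[b'']\leq[b]$, which is just topological purity / the closure relations internal to $B(G)_x$ below $[b]$ — but this is precisely the kind of statement we are \emph{not} entitled to assume, so I must argue differently. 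The cleaner route: apply Theorem~\ref{thm1strong} not to $[b]$ but note that the proof of ``(3)$\Rightarrow$(1)'' in Theorem~\ref{thm1strong} only used the part of~(3) concerning classes strictly between; more robustly, since condition~(1) fails with witness $[b']=[b_x]$, and since~(1) and~(2) and~(3) are all equivalent, I instead conclude directly that the conjunction of ``$B(G)_x$ saturated above $[b]$'' and ``closure relations hold above $[b]$'' fails, and then case-split. For the upper bound $[b']\neq[b_x]$: the class $[b_x]$ always lies in $B(G)_x$, is the generic class so $\mathcal N_{[b_x]}=IxI$ is dense, hence $\overline{\mathcal N_{[b_x]}}=\overline{IxI}\cap IxI=IxI$ trivially equals the (improper) union, so $[b']=[b_x]$ cannot be the witness either.

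Putting this together: the witness $[b']$ of the failure of~(3) satisfies $[b]<[b']<[b_x]$ strictly, and falls into case~(a) when $[b']\notin B(G)_x$ (which in particular exhibits $B(G)_x$ as non-saturated, via the chain $[b]<[b']<[b_x]$ with endpoints in $B(G)_x$), and into case~(b) otherwise. I expect the main obstacle to be the careful bookkeeping in the previous paragraph — specifically, ruling out $[b']=[b]$ and $[b']=[b_x]$ as spurious witnesses of the failure of condition~(3). The endpoint $[b_x]$ is easy (its stratum is dense). The endpoint $[b]$ requires a small argument that the closure statement for $[b']=[b]$ alone, \emph{without} cordiality, cannot be the point of failure; the right way to see this is to note that if~(3) failed \emph{only} at $[b']=[b]$, then~(3) would hold for all $[b']$ with $[b]<[b']\leq[b_x]$, and re-running the implication ``(3)$\Rightarrow$(1)'' of Theorem~\ref{thm1strong} with $[b]$ replaced by any such strictly-larger $[b']$ would force $d_x(b')-\dim X_x(b')=d_x(b_x)-\dim X_x(b_x)$ for all those $[b']$, and then a limiting/chain argument pushes this down to $[b]$ itself, contradicting $d_x(b)-\dim X_x(b)=0<d_x(b_x)-\dim X_x(b_x)$. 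This last step is the only genuinely delicate piece; everything else is a formal rephrasing of Theorem~\ref{thm1strong}.
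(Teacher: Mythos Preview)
Your approach is the paper's approach: observe that the hypothesis $\dim X_x(b)=d_x(b)$ together with non-cordiality gives $d_x(b)-\dim X_x(b)=0<d_x(b_x)-\dim X_x(b_x)$, so condition~(1) of Theorem~\ref{thm1strong} fails, hence~(3) fails, and negate. The paper's own proof is literally that one inequality and nothing more.

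You are right that there is a loose end the paper does not spell out, namely the strictness $[b]<[b']<[b_x]$. Your treatment of the upper endpoint is fine: $\mathcal N_{[b_x]}$ is open and dense, so~(3) cannot fail there. For the lower endpoint you over-engineer. The clean argument is the one you mention and then abandon: inspect the proof of (3)$\Rightarrow$(1) in Theorem~\ref{thm1strong} and note that the closure hypothesis is only ever invoked in the form $\mathcal N_{[b_i]}\subset\overline{\mathcal N_{[b_{i+1}]}}$ along a chain $[b']=[b_0]<[b_1]<\dotsm<[b_n]=[b_x]$, so only for classes $[b_{i+1}]$ strictly above $[b]$. Thus if saturation and the closure property hold for all $[b']$ with $[b]<[b']<[b_x]$ (and automatically at $[b_x]$), the argument for (3)$\Rightarrow$(1) already runs and yields~(1), a contradiction. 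Your proposed detour through applying Theorem~\ref{thm1strong} at each strictly larger $[\tilde b]$ and then a vague ``limiting/chain'' step to descend to $[b]$ is really the same computation repackaged, and the final ``limiting'' step would in any case just be this chain argument again; you should commit to the direct version.
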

\begin{proof}
We have $\dim X_x(b_x)<d_x(b_x)$, and hence $d_x(b)-\dim X_x(b)=0< d_x(b_x)-\dim X_x(b_x)$.
\end{proof}
Along these same lines, one could also formulate more precise statements relating $d_{x}(b_x)-\dim X_{x}(b_x)-d_x(b)+\dim X_x(b)$ to the number of $[b']$ as in Corollary \ref{thmconv}.

\section{Families of \cd~elements}\label{sec3}

Characterizing the \cd\ elements in $\widetilde{W}$ requires a good description of the maximal Newton point $\nu_x$. One especially useful description of $\nu_x$ uses paths in the quantum Bruhat graph and is available for groups $G$ which are split, connected, and semisimple. Thus for the remainder of the paper we make these additional assumptions on $G$.

Let $\Phi$ be the set of relative roots of $G$ over $\breve F$ with respect to $T$, and let $\Phi^+$ be the set of positive roots. Let $S$ be the basis of $\Phi$ of simple roots corresponding to $B$. We also identify ${S}$ with the set of simple reflections in $W$. The finite Weyl group $W$ acts on $\R^r$ as a finite reflection group, where $r$ is the rank of $G$. The set of reflections in $W$ is defined as $R = \{wsw^{-1} \mid s \in {S}, w \in W \}.$  There is a bijection  between $\Phi^+$ and $R$. More precisely, let $\alpha \in \Phi^+$ and write $\alpha=w(\alpha_i)$ for some simple root $\alpha_i$ and $w\in W$. Then $\alpha$ corresponds to the well-defined reflection $s_\alpha:=ws_iw^{-1} \in W$. Throughout, we denote simple reflections by $s_i$ (the index being a roman letter), and reflections associated with a positive root (which may or may not be simple) by $s_{\alpha}$ (the index being a greek letter). 

\subsection{\Cd~elements and the quantum Bruhat graph}

The primary tool in the proof of Theorem \ref{T:MainEx} (b) and (c) is a labeled directed graph associated with the group $G$ called the quantum Bruhat graph. We now review some key properties of this graph and its relation to maximal Newton points.

\begin{definition}[\cite{FGP}] We construct the \textit{quantum Bruhat graph} $\Gamma_G$ as follows.
\begin{enumerate}
\item The vertices of the graph are the elements $w \in W$. 
\item Draw a directed edge $w \longrightarrow ws_{\alpha}$ for any $\alpha \in \Phi^+$ if either of the following is satisfied:
\begin{align*}w\ \textcolor{blue}{\longrightarrow}\ ws_{\alpha} \quad \text{if}\  \ & \ell(ws_{\alpha}) = \ell(w)+1,\ \ \text{or}  \\
w\ \textcolor{red}{\longrightarrow}\ ws_{\alpha} \quad \text{if}\ \ & \ell(ws_{\alpha}) = \ell(w)-\langle  2\rho, \alpha^{\vee} \rangle +1.
\end{align*}
\item Label the edge $w \longrightarrow ws_{\alpha}$ by the corresponding root $\alpha$. 
\end{enumerate} 
\end{definition}

Figure \ref{fig:S_3QBG} shows the quantum Bruhat graph for $G={\rm SL}_3$.  As in Figure \ref{fig:S_3QBG}, we can always draw $\Gamma_G$ such that vertices are ranked by length increasing upward, in which case the first type of edge (colored blue) always points upward and the second type (colored red) downward; this will be our convention throughout the paper. 
 Note that the upward edges correspond precisely to the covering relations in Bruhat order, and so we can also view the vertices in $\Gamma_G$ as a ranked partially ordered set. We write $v \lessdot w$  if $v \leq w$ in Bruhat order and $\ell(v) = \ell(w)-1$ to denote such a covering relation. 

 \begin{figure}[h]
\begin{center}
 \resizebox{1.9in}{!}
{
\begin{overpic}{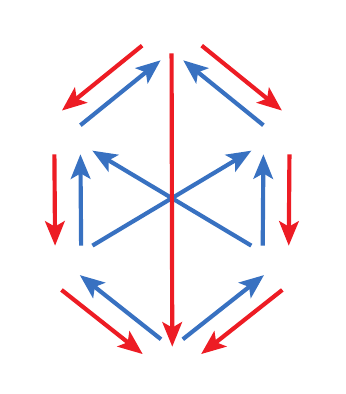}
\put(65.5,63){\bf \huge{$s_{21}$}}
\put(9,63.5){\bf \huge{$s_{12}$}}
\put(65.5,31){\bf \huge{$s_{2}$}}
\put(11.5,31){\bf \huge{$s_{1}$}}
\put(41.5,4){\huge{1}}
\put(35,90.5){\bf \huge{$s_{121}$}}
\put(20,15){\bf \large{\textcolor{dkgreen}{$\alpha_1$}}}
\put(62,15){\bf \large{\textcolor{dkgreen}{$\alpha_2$}}}
\put(75,49){\bf \large{\textcolor{dkgreen}{$\alpha_1$}}}
\put(6.5,49){\bf \large{\textcolor{dkgreen}{$\alpha_2$}}}
\put(20,83){\bf \large{\textcolor{dkgreen}{$\alpha_1$}}}
\put(60.5,83){\bf \large{\textcolor{dkgreen}{$\alpha_2$}}}
\put(35,40){\bf \large{\textcolor{dkgreen}{$\alpha_1$}}}
\put(48.5,40){\bf \large{\textcolor{dkgreen}{$\alpha_2$}}}
\put(42.5,40){\bf \large{\textcolor{dkgreen}{$+$}}}
\end{overpic}
}
\caption{The quantum Bruhat graph $\Gamma_G$ for  $G={\rm SL}_3$.}\label{fig:S_3QBG}
\end{center}
\end{figure}

 Define the \emph{weight of an edge} in the quantum Bruhat graph $\Gamma_G$ as follows.
\begin{enumerate}
\item An upward edge $w\ \textcolor{blue}{\longrightarrow}\ ws_{\alpha}$ carries no weight.  
\item A downward edge $w\ \textcolor{red}{\longrightarrow}\ ws_{\alpha}$ carries a weight of $\alpha^{\vee}$.
\end{enumerate} 
The \emph{weight of a path} in $\Gamma_G$ is the sum of the weights over all of the edges in the path.  For example, in $\Gamma_{{\rm SL}_3}$ from Figure \ref{fig:S_3QBG}, the weight of each of the three shortest paths from $s_1s_2 = s_{12}$ to $s_2$ equals $\alpha^{\vee}_1+\alpha^{\vee}_2$.  In general, given any $u,v \in W$, by \cite[Lem.~1]{PostQBG} there always exists a path in $\Gamma_G$ from $u$ to $v$, and all paths of minimal length between $u$ and $v$ have the same weight.

Since $G$ is split, connected, and semisimple, then under a superregularity hypothesis guaranteeing that $x = t^{v\lambda}w \in \widetilde{W}$ is sufficiently far from the walls of any Weyl chamber, the maximal Newton point $\nu_x$ can be computed from the weight of certain paths in the quantum Bruhat graph $\Gamma_G$. More specifically, \cite[Thm.~3.2]{BeMaxNPs} says that under a superregularity hypothesis on $\lambda$, the maximal Newton point $\nu_x$ can be expressed as
 \begin{equation}\label{E:Nu_xQBG}
 \nu_x = \lambda - \alpha^{\vee}_x,
 \end{equation}
where $\alpha^{\vee}_x$ denotes the weight of any path of minimal length from $w^{-1}v$ to $v$ in $\Gamma_G$. 

Denote by $d_{\Gamma}(u,v)$ the minimum length among all paths in $\Gamma_G$ from $u$ to $v$; the choice of notation represents the fact that $d_{\Gamma}(u,v)$ equals the distance between these two elements in the graph $\Gamma_G$. As an important special case, denote the minimum length of any path in $\Gamma_G$ from $w$ to the identity which uses exclusively downward edges by $d_{\downarrow}(w)$.  We remark that such a path always exists, since by definition of $\Gamma_G$ any reduced expression for $w$ determines an all downward path from $w$ to the identity having $\ell(w)$ edges.  We say that any path in $\Gamma_G$ from $u$ to $v$ which uses exactly $d_\Gamma(u,v)$ edges \emph{realizes} $d_\Gamma(u,v)$.  Similarly, any downward path in $\Gamma_G$ from $w$ to 1 consisting of exactly $d_{\downarrow}(w)$ edges \emph{realizes} $d_{\downarrow}(w)$.

We are now able to characterize the \cd~elements under our additional superregularity hypothesis in a purely combinatorial manner which does not require any explicit knowledge of the maximal Newton point.

\begin{prop}\label{etaminpath}
Let $x = t^{v\lambda}w \in \widetilde{W}$, and suppose that $\langle \alpha_i, \lambda \rangle > M$ for all simple roots $\alpha_i$, where $M$ is the constant defined in \cite[Eq.~6.1]{BeMaxNPs}.
Then $x$ is \cd\ if and only if 
\begin{equation*}
d_{\Gamma}(w^{-1}v, v) = \ell(v^{-1}wv) = \ell(\eta(x)).
\end{equation*}
\end{prop}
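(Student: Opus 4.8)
The plan is to unwind the definition of cordiality using the dimension formula and the explicit description of $\nu_x$ via the quantum Bruhat graph. Recall that $x$ is cordial if and only if $\ell(x) - \ell(\eta(x)) = \langle 2\rho, \nu_x\rangle - \defect(b_x)$. First I would substitute the formula \eqref{E:Nu_xQBG}, namely $\nu_x = \lambda - \alpha^\vee_x$ where $\alpha^\vee_x$ is the weight of any minimal-length path in $\Gamma_G$ from $w^{-1}v$ to $v$; this gives $\langle 2\rho, \nu_x\rangle = \langle 2\rho, \lambda\rangle - \langle 2\rho, \alpha^\vee_x\rangle$. Next I would use the standard identity $\ell(x) = \langle 2\rho, \lambda\rangle - \ell(\text{something})$; more precisely, for $x = t^{v\lambda}w$ with $\lambda$ dominant (and sufficiently regular that $t^{v\lambda}$ lies in the interior of its chamber), one has $\ell(x) = \langle 2\rho, \lambda\rangle - \ell(v^{-1}wv)$ after accounting for the shift built into $\eta$; I would need to pin down this length formula carefully, using that superregularity forces $\lambda$ away from all walls so there are no correction terms. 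Combining these, the cordiality equation becomes $\langle 2\rho, \alpha^\vee_x\rangle - \defect(b_x) = -2\ell(\eta(x)) + (\text{length terms that cancel})$, which I expect to reduce to a clean relation between $\langle 2\rho, \alpha^\vee_x\rangle$, $\defect(b_x)$, and $\ell(\eta(x))$.

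The second main ingredient is the key combinatorial fact about weights of minimal paths in $\Gamma_G$: for any path realizing $d_\Gamma(u,v)$, if the path uses $k$ upward edges and $m$ downward edges (so $k + m = d_\Gamma(u,v)$) with downward edges labeled by roots $\beta_1,\dots,\beta_m$, then the weight is $\sum \beta_j^\vee$ and telescoping the length changes along the path gives $\ell(v) - \ell(u) = k - \sum_j(\langle 2\rho,\beta_j^\vee\rangle - 1) = k - \langle 2\rho, \alpha^\vee_x\rangle + m = d_\Gamma(u,v) - \langle 2\rho, \alpha^\vee_x\rangle$. Hence $\langle 2\rho, \alpha^\vee_x\rangle = d_\Gamma(w^{-1}v, v) - \ell(v) + \ell(w^{-1}v) = d_\Gamma(w^{-1}v,v) + \ell(v^{-1}wv) - 2\ell(v)$ or a similar bookkeeping identity — I would compute $\ell(w^{-1}v) - \ell(v)$ in terms of $\ell(v^{-1}wv) = \ell(\eta(x))$ carefully, possibly invoking that $\ell(w^{-1}v) = \ell(v^{-1}w)$ and relating to $\eta(x) = v^{-1}wv$ in the split case where $\sigma$ is trivial. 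Substituting this into the reduced cordiality equation should leave exactly $d_\Gamma(w^{-1}v,v) = \ell(\eta(x))$, up to also accounting for the defect term.

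The defect term $\defect(b_x)$ is where I expect the real subtlety to lie, and handling it is the main obstacle. I would use the general fact that $\defect(b) = \ell(\eta_{P}) $-type formulas, or more directly the relationship (see \cite{HeAnnals}) expressing $\defect(b_x)$ in terms of the difference between $\langle 2\rho, \nu_x\rangle$ and the length of a straight representative $y_{b_x}$; concretely, $\ell(y_{b_x}) = \langle 2\rho, \nu_x\rangle$ was used in the proof of Corollary \ref{coradlvnewton1}, and the defect records the gap between $\rk_F G$ and $\rk_F J_{b_x}$, which for split $G$ equals $r - \dim(\nu_x)^{\perp\text{-type}}$, i.e.\ the corank of the centralizer of $\nu_x$. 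The point will be that under superregularity this defect is exactly controlled by which simple reflections appear, and that the bookkeeping identity from the quantum Bruhat graph already incorporates precisely this correction — intuitively, a downward edge labeled by a non-simple root $\beta$ with $\ell(s_\beta) = \langle 2\rho,\beta^\vee\rangle - 1$ contributes a "defect unit," matching the $-1$ in the red-edge length rule. I would therefore prove that $\langle 2\rho, \nu_x\rangle - \defect(b_x) = \langle 2\rho, \lambda\rangle - d_\Gamma(w^{-1}v, v)$ (or the analogous identity), which when combined with $\ell(x) - \ell(\eta(x)) = \langle 2\rho,\lambda\rangle - 2\ell(\eta(x))$ and the defining equation of cordiality yields the claimed equivalence. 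Once both sides are expressed in terms of $d_\Gamma(w^{-1}v,v)$ and $\ell(\eta(x))$, cordiality is literally the assertion $d_\Gamma(w^{-1}v,v) = \ell(\eta(x))$, and since a path of length $\ell(\eta(x))$ always exists (any reduced word for $\eta(x) = v^{-1}wv$ gives an all-upward or all-downward path of that length), the inequality $d_\Gamma \le \ell(\eta(x))$ is automatic and cordiality pins down equality.
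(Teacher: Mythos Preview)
Your telescoping computation in the second paragraph is exactly right and is the heart of the paper's argument: for any minimal path one has $\langle 2\rho,\alpha^\vee_x\rangle = d_\Gamma(w^{-1}v,v) - \ell(v) + \ell(w^{-1}v)$. But two errors elsewhere prevent the plan from closing.

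First, the length formula you propose, $\ell(x)=\langle 2\rho,\lambda\rangle - \ell(v^{-1}wv)$, is false. The correct identity (for $\lambda$ dominant and regular, see \cite[Lem.~3.4]{LSPeterson}) is
\[
\ell(x)=\langle 2\rho,\lambda\rangle - \ell(w^{-1}v)+\ell(v),
\]
and the two terms $\ell(w^{-1}v)$ and $\ell(v)$ do \emph{not} combine into $\ell(v^{-1}wv)$ in general. Your subsequent attempt to rewrite $\ell(w^{-1}v)-\ell(v)$ in terms of $\ell(\eta(x))$ is therefore doomed; no such bookkeeping identity exists. With the correct length formula, however, no such rewriting is needed: the quantity $-\ell(w^{-1}v)+\ell(v)$ appears in both the expression for $\ell(x)-\langle 2\rho,\nu_x\rangle$ and in the telescoping identity, and substituting one into the other gives directly $\ell(x)-\langle 2\rho,\nu_x\rangle = d_\Gamma(w^{-1}v,v)$.

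Second, the defect is not the main obstacle; it vanishes. Under the superregularity hypothesis, \eqref{E:Nu_xQBG} gives $\nu_x=\lambda-\alpha^\vee_x$, which lies in $X_*(T)$ since $\lambda$ does and $\alpha^\vee_x$ is a sum of coroots. For split $G$ an integral Newton point forces $J_{b_x}$ to have full $F$-rank, so $\defect(b_x)=0$. Once you know this, cordiality reads $\ell(x)-\ell(\eta(x))=\langle 2\rho,\nu_x\rangle$, and combining with the previous paragraph yields $d_\Gamma(w^{-1}v,v)=\ell(\eta(x))$ on the nose. Your speculation about defect contributions from non-simple downward edges is unnecessary and does not match how the defect actually enters.
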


\begin{proof}
First note by \eqref{E:Nu_xQBG} that $\nu_x$ is integral under our superregularity hypothesis on $\lambda$.  Therefore, $\defect (b_x) = 0$ in this case, and so $x$ is \cd\ if and only if
$\ell(x) - \ell(\eta(x)) = \langle 2\rho, \nu_x \rangle.$
Now recall a length formula for $x$ from \cite[Lem.~3.4]{LSPeterson}, which applies since $\lambda$ is both regular and dominant:
\begin{equation}\label{E:xlengthreform}
\ell(x) = \ell(t^{\lambda}) - \ell(w^{-1}v) + \ell(v) = \langle 2\rho, \lambda \rangle - \ell(w^{-1}v)+\ell(v).
\end{equation}
Combine Equations \eqref{E:Nu_xQBG} and \eqref{E:xlengthreform} to write
\begin{align*}
\ell(x) - \langle 2\rho, \nu_x \rangle 
& =  \bigl( \langle 2\rho, \lambda \rangle - \ell(w^{-1}v)+\ell(v)\bigr) - \langle 2\rho, \lambda - \alpha^{\vee}_x \rangle\\
& = \langle 2\rho, \alpha^{\vee}_x \rangle - \ell(w^{-1}v) + \ell(v),
\end{align*}
where $\alpha^\vee_x$ is the weight of any minimal length path $p$ in $\Gamma_G$ from $w^{-1}v$ to $v$.   Therefore, $x$ is \cd\ if and only if 
$\langle 2 \rho, \alpha^\vee_x \rangle - \ell(w^{-1}v) + \ell(v) = \ell(\eta(x)).$
It thus suffices to show that 
\begin{equation}\label{suffices}
\langle 2 \rho, \alpha^\vee_x \rangle - \ell(w^{-1}v) + \ell(v) = d_\Gamma(w^{-1}v,v).
\end{equation} 

Note that the quantity $-\ell(w^{-1}v)+\ell(v)$ equals the difference in rank in the poset $\Gamma_G$ from the beginning to the end of the path $p$, where the quantity is positive, negative, or zero according to whether the rank of the final vertex of $p$ is higher, lower, or the same as the rank of its initial vertex.  For ease of reference, denote this quantity by $\Delta \rk(p) = -\ell(w^{-1}v)+\ell(v)$.  
Recall that we draw an edge $w \longrightarrow ws_\alpha$ in $\Gamma_G$ if and only if 
\begin{equation*}
\ell(ws_{\alpha}) = 
\begin{cases}
\ell(w)+1,\ \  \text{or}  \\
\ell(w)-\langle 2\rho, \alpha^{\vee} \rangle +1,
\end{cases}
\end{equation*}
where the edges of the first type are directed upward and the second type are directed downward.  Therefore, each upward edge in $p$ contributes $+1$ to $\Delta \rk(p)$, and each downward edge in $p$ labeled by $\alpha$ contributes $-\langle 2\rho, \alpha^\vee\rangle +1$ to $\Delta \rk(p)$.  Denote the roots labeling the downward edges by $\alpha_{d_i}$ for $i= 1, \dots, d$ where $d$ equals the number of downward edges in $p$. Denote the number of upward edges in $p$ by $u$. We can thus write 
\begin{align}\label{rkrewrite}
\Delta \rk(p) & = u +  \sum\limits_{i=i}^d (-\langle 2\rho, \alpha^\vee_{d_i} \rangle + 1) \nonumber \\
& = (u+d)  -  \sum\limits_{i=i}^d \langle 2\rho, \alpha^\vee_{d_i} \rangle  \\
&= d_\Gamma(w^{-1}v,v) - \sum\limits_{i=i}^d \langle 2\rho, \alpha^\vee_{d_i} \rangle. \nonumber
\end{align}
On the other hand, recall that the weight of the path $p$ is defined to be $\sum \alpha_{d_i}^\vee$ summing over all the downward edges, so that by linearity we can rewrite \eqref{rkrewrite} as
\begin{equation*}
\Delta \rk(p) = d_\Gamma(w^{-1}v,v) - \left\langle 2\rho, \sum\limits_{i=i}^d  \alpha^\vee_{d_i} \right\rangle = d_\Gamma(w^{-1}v,v) - \langle 2\rho, \alpha^\vee_x\rangle.
\end{equation*}
Therefore,
\begin{equation*}
d_\Gamma(w^{-1}v,v) = \langle 2\rho, \alpha^\vee_x \rangle + \Delta \rk(p)  = \langle 2 \rho, \alpha^\vee_x \rangle - \ell(w^{-1}v) + \ell(v),
\end{equation*}
confirming \eqref{suffices} and concluding the proof.
\end{proof}

\begin{lemma}\label{ineqrmk}
For any $u,v\in W$ we have $d_{\Gamma}(u, v) \leq \ell(u^{-1}v)$.
\end{lemma}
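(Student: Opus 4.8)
The plan is to exhibit an explicit path in $\Gamma_G$ from $u$ to $v$ having exactly $\ell(u^{-1}v)$ edges, which immediately gives $d_{\Gamma}(u,v) \le \ell(u^{-1}v)$ since $d_{\Gamma}$ is the minimum over all paths. First I would set $z = u^{-1}v$ and fix a reduced expression $z = s_{i_1} s_{i_2} \cdots s_{i_k}$ with $k = \ell(z) = \ell(u^{-1}v)$. The idea is to read off the associated ``saturated chain'' of intermediate group elements $u = u_0,\ u_1,\ \dots,\ u_k = v$, where $u_j = u \cdot s_{i_1} \cdots s_{i_j}$, so that consecutive elements differ by a single simple reflection on the right: $u_j = u_{j-1} s_{i_j}$.

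The key point is that for each consecutive pair $u_{j-1}, u_j = u_{j-1}s_{i_j}$, there is an edge between them in $\Gamma_G$ (in one direction or the other). Indeed, since $s_{i_j}$ is a simple reflection, $\ell(u_j) - \ell(u_{j-1}) = \pm 1$. If $\ell(u_j) = \ell(u_{j-1}) + 1$, then by definition of $\Gamma_G$ there is a (blue, upward) edge $u_{j-1} \longrightarrow u_j$ labeled by the positive root $\alpha$ with $s_\alpha = u_{j-1} s_{i_j} u_{j-1}^{-1}$. If instead $\ell(u_j) = \ell(u_{j-1}) - 1$, then I need $\ell(u_j) = \ell(u_{j-1}) - \langle 2\rho, \alpha^\vee \rangle + 1$ for the relevant root $\alpha$; but here $\alpha = u_{j-1}(\alpha_{i_j})$ is a conjugate of a simple root, and one must check that $\langle 2\rho, \alpha^\vee \rangle = 2$ exactly when $\alpha^\vee$ is a simple coroot — which is not generally true for conjugates of simple coroots. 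The cleaner fix: rather than requiring a downward edge in the specified direction, note that the edge $u_j \longrightarrow u_{j-1} = u_j s_{i_j}$ (reversing the roles) satisfies $\ell(u_j s_{i_j}) = \ell(u_j) + 1$, so this reversed step is an upward edge. This does not directly help, since we need edges oriented from $u$ toward $v$. The actual resolution is to invoke \cite[Lem.~1]{PostQBG} more carefully, or better: use that along a reduced word, each step $u_{j-1} \to u_j$ with a length \emph{increase} is an upward edge, and one should simply choose the reduced word / chain so that it is increasing — but that is only possible when $v \ge u$ in Bruhat order.

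The honest argument, which I expect to be the real content, is this: it suffices to handle the case of a single simple reflection and then concatenate. For $u$ and $u s_i$ with $s_i$ simple, there is \emph{always} a directed edge in $\Gamma_G$ from $u$ to $u s_i$: if $\ell(u s_i) = \ell(u)+1$ this is the blue edge by definition; if $\ell(us_i) = \ell(u) - 1$, write $\alpha = u(\alpha_i) \in \Phi^+$ (note $\alpha$ is positive precisely because $\ell(us_i) < \ell(u)$ would correspond to $u^{-1}(\text{something})$... one checks $us_i = us_iu^{-1} \cdot u = s_\alpha u$ so actually $u s_i = s_{\alpha'} u$; the correct bookkeeping is $us_i u^{-1} = s_{u(\alpha_i)}$), and then the downward-edge condition $\ell(us_i) = \ell(u) - \langle 2\rho, \alpha^\vee\rangle + 1$ holds iff $\langle 2\rho,\alpha^\vee\rangle = 2$. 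Since this can fail, the correct statement to use is that in $\Gamma_G$ one allows the reflection to be non-simple, so instead of using a reduced word in \emph{simple} reflections, I would build a path using whatever edges $\Gamma_G$ provides and bound its length by $\ell(u^{-1}v)$ via the well-known fact (\cite{FGP}, \cite{PostQBG}) that the distance in $\Gamma_G$ is bounded by the Bruhat-order length of the quotient. The main obstacle is thus purely one of citing/reproving the standard fact that any reduced word gives a path of that length in $\Gamma_G$ when read as a sequence of covering-type moves; the cleanest route is to observe that a reduced expression for $u^{-1}v$ yields, via right multiplication, a sequence of $\ell(u^{-1}v)$ simple-reflection steps from $u$ to $v$, each of which is an edge of $\Gamma_G$ — upward when length increases, and when length decreases one uses that $\Gamma_G$ still has \emph{an} edge between $u_{j-1}$ and $u_j$ because one of the two length conditions must be arranged by choosing the reduced word appropriately (e.g.\ a reduced word adapted so that the partial products trace out a path in the Bruhat graph of covering relations and quantum edges), appealing to \cite[Lem.~1]{PostQBG} for existence and the length bound.
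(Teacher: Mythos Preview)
Your initial strategy is exactly the paper's: take a reduced expression $u^{-1}v = s_{i_1}\cdots s_{i_k}$, set $u_j = u s_{i_1}\cdots s_{i_j}$, and observe that each step $u_{j-1}\to u_j$ is an edge of $\Gamma_G$. But you then derail on a non-issue and never finish the argument.

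The error is in your identification of the edge label. In the paper's definition the edge is $w \longrightarrow w s_\alpha$, with right multiplication by $s_\alpha$; the label is $\alpha$. Since $u_j = u_{j-1}s_{i_j}$, the relevant root is the simple root $\alpha_{i_j}$ itself, not the conjugate $u_{j-1}(\alpha_{i_j})$. For a simple root one always has $\langle 2\rho,\alpha_{i_j}^\vee\rangle = 2$, so the downward-edge condition $\ell(u_{j-1}s_{i_j}) = \ell(u_{j-1}) - \langle 2\rho,\alpha_{i_j}^\vee\rangle + 1$ reduces to $\ell(u_{j-1}s_{i_j}) = \ell(u_{j-1}) - 1$, which is precisely the alternative to $\ell(u_{j-1}s_{i_j}) = \ell(u_{j-1}) + 1$. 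Hence every simple-reflection step is an edge of $\Gamma_G$ in the required direction, regardless of whether length goes up or down. Once you correct this, the proof is the one-line argument in the paper; no appeal to \cite{PostQBG} or to specially adapted reduced words is needed.
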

\begin{proof}
Taking any reduced expression for $u^{-1}v = s_{i_1}\cdots s_{i_k}$ and following the edges labeled by the simple roots $\alpha_{i_1}, \dots, \alpha_{i_k}$ in order, we obtain a path from $u$ to $v$ which has exactly $\ell(u^{-1}v)$ edges.  
\end{proof}

\begin{remark} In particular, under the superregularity hypothesis, \cd~elements are precisely those for which no shorter path exists from $w^{-1}v$ to $v$ than the one constructed in the proof of Lemma \ref{ineqrmk}, where $v,w$ are as in Proposition \ref{etaminpath}.
\end{remark}

We now provide an example which illustrates how to use Proposition \ref{etaminpath} to identify families of cordial elements. Recall that we already considered this case (in greater generality) in Example \ref{exantidom}. 

\begin{ex}\label{antidom}
Suppose that $x = t^{w_0\lambda}w \in \widetilde{W}$ so that $x$ is in the antidominant Weyl chamber.  If $\lambda$ is superregular, we want to show also with this new method that $x$ is cordial. By Proposition \ref{etaminpath}, it suffices to prove that $d_\Gamma(w^{-1}w_0,w_0) = \ell(\eta(x))$.  Since the end vertex of the path in $\Gamma_G$ is the longest element $w_0$, and since every upward edge only increases the length by one, any path of minimal length ending at $w_0$ is necessarily a path containing only upward edges.  Comparing rank, any minimal path from $w^{-1}w_0$ to $w_0$ then has exactly $\ell(w_0) - \ell(w^{-1}w_0) = \ell(w_0)- \ell(w_0w)$ edges.  Recall from \cite[Cor.~2.3.3]{BB} that $\ell(w_0w) = \ell(w_0)-\ell(w)$ and $\ell(w_0ww_0) = \ell(w)$ for all $w \in W$.  Therefore, for these elements, we have 
\[ \ell(\eta(x)) = \ell(w_0ww_0) = \ell(w) = \ell(w_0) - \ell(w^{-1}w_0) = d_\Gamma(w^{-1}w_0,w_0).\] By Proposition \ref{etaminpath}, $x$ is cordial.  Compare Theorem \ref{T:MainEx} (a), which we recall was proved in Example \ref{exantidom}, without any superregularity hypothesis.
\end{ex}

\subsection{Standard parabolic Coxeter and \sha~elements}\label{spcsha}

In this section, we develop the necessary background to study the latter two families of cordial elements identified in Theorem \ref{T:MainEx}.

The \emph{reflection length} of $w \in W$ is the minimal number of reflections required to express $w$ as a product of elements in $R$; namely,
\begin{equation*}
\ell_R(w) = \min \left\{ r \in \mathbb{N} \ \middle|\ w = s_{\beta_1}\cdots s_{\beta_r}\ \text{for}\ s_{\beta_i} \in R \right\}.
\end{equation*}
By definition, $\ell_R(w) \leq \ell(w)$. We now recall a characterization of those elements such that $\ell_R(w) = \ell(w)$.

\begin{definition}\label{parcoxeter}
The element $w \in W$ is a \emph{standard parabolic Coxeter element} if each simple reflection is used at most once in any (equivalently every) reduced expression for $w$.  
\end{definition}

As the terminology suggests, standard parabolic Coxeter elements are those which are Coxeter elements in some standard parabolic subgroup of $W$. (We remark that standard parabolic Coxeter elements have also appeared by other names in the literature; for example, they are called \emph{boolean} in \cite{RagTenner}.)  By \cite[Lem.~2.1]{BDSW}, the element $w$ is standard parabolic Coxeter if and only if $\ell_R(w) = \ell(w)$, a property which will be critical in the proof of Theorem \ref{T:MainEx} (b).

Next we define a slightly more general family of elements in $W$, which properly contains the standard parabolic Coxeter elements. 

\begin{definition}\label{sha} We say $w \in W$ \emph{contains} the element $v \in W$ if there exist $u,u'\in W$ such that $w = uvu'$ and $\ell(w) = \ell(u)+\ell(v)+\ell(u')$. An element $w \in W$ is called  \emph{small-height-containing} if $w$ contains a non-simple reflection $s_\alpha$ such that $\ell(s_\alpha)=\langle 2\rho, \alpha^\vee \rangle -1$. Otherwise, we say that $w$ is \emph{\sha}.
\end{definition}
\noindent Note that all simple reflections $\alpha_i$ satisfy $\ell(s_{\alpha_i}) = \langle 2\rho, \alpha_i\rangle -1$, so we intentionally exclude these. Also note that the \sha\ condition cannot be verified by looking at only one reduced expression, as the example $s_{1213} = s_{1231}$ in type $A_3$ illustrates.

This terminology is inspired by the related notion of \emph{short-braid-avoiding} elements, which are those elements of $W$ which do not contain a subexpression of the form $s_is_js_i$ in any reduced expression; see \cite{Fan}. If $G$ is simply-laced, then for any $\alpha \in \Phi^+$ we have $\ell(s_\alpha) = \langle 2\rho, \alpha^\vee \rangle -1$ by \cite[Lem.~4.3]{BFP}, and so the notions of \sha~and short-braid-avoiding coincide in this case.  More generally, for any $\alpha \in \Phi^+$ we always have $\ell(s_\alpha) \leq \langle 2\rho, \alpha^\vee \rangle -1$, and the inequality may be strict.  Rewriting this expression, we see that $\height \alpha^\vee \geq \frac{\ell(s_\alpha)+1}{2},$ and so those reflections which we avoid in Definition \ref{sha} are precisely those whose height is as ``small'' as it could possibly be.  There is also a relationship between the \sha~and \emph{fully commutative} elements defined in \cite{StFC}, which are those for which any reduced expression can be obtained from any other by means of only commuting relations.  In the simply-laced case,  it follows from \cite[Prop.~2.1]{StFC} that all of these notions coincide. 

\begin{ex}
As an example which illustrates the relations among these families, we identify the standard parabolic Coxeter, \sha, short-braid-avoiding, and fully commutative elements for $G$ of type $C_2$.  In this case, $W = \langle s_1, s_2 \mid s_1^2=s_2^2=(s_1s_2)^4=1\rangle$ so that the four reflections are $s_1, s_2, s_{121},$ and $s_{212}$, and the other nontrivial elements (all of which are rotations in $\R^2$) are $s_{12}, s_{21}$, and $w_0=s_{1212}$.  The standard parabolic Coxeter elements are thus $\{1, s_1, s_2, s_{12}, s_{21}\},$ which coincides here with the set of short-braid-avoiding elements.  All of the elements besides $w_0$ are fully commutative.  To determine the \sha~elements, we must further identify the coroots which correspond to each non-simple reflection. We follow the convention that $\alpha_1$ is the short simple root and $\alpha_2$ the long one. Then
\begin{eqnarray*}
s_{121}  \quad \longleftrightarrow & \alpha_1+\alpha_2 & \longleftrightarrow \quad \alpha_1^\vee + \alpha_2^\vee, \\
s_{212}  \quad \longleftrightarrow & 2\alpha_1+\alpha_2 & \longleftrightarrow \quad \alpha_1^\vee + 2\alpha_2^\vee.
\end{eqnarray*}
We thus see that $\ell(s_{121})  =  \langle 2\rho, \alpha_1^\vee + \alpha_2^\vee \rangle -1$, so that \sha~elements cannot contain $s_{121}$.  By contrast, $3=\ell(s_{212})  \neq  \langle 2\rho, \alpha_1^\vee + 2\alpha_2^\vee \rangle -1 = 5$, so $s_{212}$ does not need to be avoided.  Therefore, the set of \sha~elements in $C_2$ is $\{1, s_1, s_2, s_{12}, s_{21}, s_{212}\}$, which sits properly between the sets of standard parabolic Coxeter (or equivalently, short-braid-avoiding) and fully commutative elements.
\end{ex}

\subsection{Two additional families of \cd~elements}

The goal of this section is to prove parts (b) and (c) of Theorem \ref{T:MainEx}.  For part (c), we first require two more technical lemmas as stepping stones to Proposition \ref{mindown}, which allows us to focus exclusively on paths in $\Gamma_G$ with all downward edges.

\begin{lemma}\label{refprod}
Let $s_\beta \in R$ be a non-simple reflection such that $\ell(s_\beta) = \langle 2\rho, \beta^\vee \rangle -1$ for some $\beta \in \Phi^+$, and suppose that $s_\beta s_\alpha \lessdot s_\beta$ for some $\alpha \in \Phi^+$.  Then $s_\beta s_\alpha = s_{\gamma_1}s_{\gamma_2}$, where  $\ell(s_\beta s_\alpha) = \ell(s_{\gamma_1}) + \ell(s_{\gamma_2})$ and $\ell(s_{\gamma_i}) = \langle 2\rho, \gamma^\vee_i \rangle -1$.
\end{lemma}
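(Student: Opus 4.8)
The plan is to analyze the geometry of the length-decreasing step $s_\beta s_\alpha \lessdot s_\beta$ and exploit the hypothesis $\ell(s_\beta) = \langle 2\rho, \beta^\vee\rangle - 1$, which says that $s_\beta$ is as ``long as possible'' for a reflection attached to $\beta^\vee$. First I would recall the standard inversion-set description of reflections: for $\beta \in \Phi^+$, the inversion set $\mathrm{Inv}(s_\beta) = \{\gamma \in \Phi^+ \mid s_\beta\gamma < 0\}$ has cardinality $\ell(s_\beta)$, and the maximal possible size of this set, namely $\langle 2\rho,\beta^\vee\rangle - 1$, is achieved exactly when the inversion set consists of all positive roots $\gamma$ with $\langle \gamma, \check\varpi\rangle$-type coefficient conditions forcing $\gamma$ ``between'' $0$ and $\beta$ — concretely, $\mathrm{Inv}(s_\beta)$ then equals $\{\gamma \in \Phi^+ \mid \gamma \leq \beta \text{ in the root order}\}$ up to the usual description. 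The point of the hypothesis is that this makes $s_\beta$ behave, combinatorially, like a reflection in a rank-two (or dihedral) sub-situation relative to $\beta$.

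The key step is then: from $s_\beta s_\alpha \lessdot s_\beta$ we get that $\alpha \in \mathrm{Inv}(s_\beta)$ (this is the standard criterion $\ell(ws_\alpha) < \ell(w) \iff w\alpha < 0$ applied to $w = s_\beta$), and moreover the covering condition pins down $\alpha$ tightly. I would then factor $s_\beta s_\alpha$ as a product of two reflections by writing $s_\beta = s_\alpha \cdot (s_\alpha s_\beta s_\alpha) = s_\alpha s_{s_\alpha\beta}$, so that $s_\beta s_\alpha = s_\alpha s_{s_\alpha\beta} s_\alpha \cdot s_\alpha \cdot \dots$ — more carefully, use $s_\beta s_\alpha = s_\beta s_\alpha$ and instead set $\gamma_1 := $ the root with $s_{\gamma_1} = s_\beta s_\alpha s_\beta$ and $\gamma_2 := \beta$, so $s_\beta s_\alpha = s_{\gamma_1} s_{\gamma_2}$ with $\gamma_1 = s_\beta\alpha$ (taken positive). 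Now $\ell(s_\beta s_\alpha) = \ell(s_\beta) - 1$ by the covering hypothesis, so the additivity $\ell(s_\beta s_\alpha) = \ell(s_{\gamma_1}) + \ell(s_{\gamma_2}) = \ell(s_{\gamma_1}) + \ell(s_\beta)$ cannot hold with this naive choice — so the right grouping must instead split $s_\beta$ itself. The cleanest route: take a reduced word for $s_\beta$, use that it is the (unique, by the maximal-inversion-set hypothesis) reduced word realizing $\mathrm{Inv}(s_\beta)$ as a ``staircase'', delete the reflection corresponding to $\alpha$ to get a reduced word for $s_\beta s_\alpha$ of length $\ell(s_\beta) - 1$, and read off from the dihedral structure that this word factors as $s_{\gamma_1} s_{\gamma_2}$ with each $\gamma_i$ again extremal, i.e.\ $\ell(s_{\gamma_i}) = \langle 2\rho, \gamma_i^\vee\rangle - 1$.

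The main obstacle I anticipate is verifying the last clause — that both factors $s_{\gamma_1}, s_{\gamma_2}$ again satisfy the extremality $\ell(s_{\gamma_i}) = \langle 2\rho, \gamma_i^\vee\rangle - 1$ — rather than merely that some two-reflection factorization with additive lengths exists. I expect this to come down to a rank-two root subsystem computation: the hypothesis on $\beta$ confines everything to (essentially) the rank-two parabolic generated by the ``endpoints'' of $\beta$, where one can enumerate all positive roots, all reflections, their lengths, and the relevant coroot pairings with $2\rho$ explicitly (the four cases $A_1\times A_1$, $A_2$, $B_2$, $G_2$), and check the claim directly. Phrasing this cleanly in general type without descending to a rank-two subsystem is the delicate part; if a uniform argument proves awkward, I would reduce to the rank-two case via the parabolic subgroup fixing the span of the roots involved and conclude by the finite check there.
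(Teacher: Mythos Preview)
Your proposal has a genuine gap, and it stems from an algebra slip at the pivotal moment. You write ``$s_\beta = s_\alpha \cdot (s_\alpha s_\beta s_\alpha) = s_\alpha s_{s_\alpha\beta}$'', but the left-hand side of that identity is $s_\beta s_\alpha$, not $s_\beta$. That is precisely the factorization you want: $s_\beta s_\alpha = s_\alpha \cdot s_{s_\alpha(\beta)}$, so one takes $\gamma_1 = \alpha$ and $\gamma_2 = s_\alpha(\beta)$. Having miscomputed the left side, you abandon this route and pivot to the choice $\gamma_2=\beta$, which (as you correctly note) cannot be length-additive, and then to a vague dihedral/rank-two reduction.

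The paper's argument, once the correct factorization is in hand, is a short and uniform inequality chain with no case analysis. One first checks $s_\alpha(\beta)>0$ and $\langle \alpha,\beta^\vee\rangle>0$ (both from \cite[Prop.~4.4.6]{BB} applied to the length comparisons at hand), so that $(s_\alpha\beta)^\vee = \beta^\vee - c'\alpha^\vee$ with $c'\ge 1$. Then the universal bound $\ell(s_\gamma)\le \langle 2\rho,\gamma^\vee\rangle -1$ gives
\[
\ell(s_{s_\alpha(\beta)}) \;\le\; \langle 2\rho,\beta^\vee - c'\alpha^\vee\rangle -1 \;\le\; \langle 2\rho,\beta^\vee-\alpha^\vee\rangle -1 \;\le\; \ell(s_\beta)-\ell(s_\alpha)-1,
\]
while $\ell(s_\beta)-1 = \ell(s_\beta s_\alpha) \le \ell(s_\alpha)+\ell(s_{s_\alpha(\beta)})$ forces equality throughout. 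Reading off the equalities yields both length-additivity and $\ell(s_{\gamma_i})=\langle 2\rho,\gamma_i^\vee\rangle-1$ for $i=1,2$ simultaneously. This is exactly the ``verifying the last clause'' step you flagged as the obstacle, and it falls out for free once the factorization is right.

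Your fallback plan of reducing to a rank-two parabolic does not work as stated: the subgroup generated by $s_\alpha$ and $s_\beta$ is dihedral, but there is no reason for the rank-two root subsystem spanned by $\alpha$ and $\beta$ to be a \emph{parabolic} subsystem of $\Phi$, so lengths computed in that subgroup need not agree with lengths in $W$, and $\rho$ for the subsystem is not the restriction of the ambient $\rho$. The extremality condition $\ell(s_{\gamma_i})=\langle 2\rho,\gamma_i^\vee\rangle-1$ is a statement about the ambient group and does not survive such a reduction.
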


\begin{proof}
For any reduced expression $s_\beta =  s_{i_1} \cdots s_{i_m}$, the condition $s_{\beta}s_{\alpha}\lessdot s_{\beta}$ together with the Strong Exchange Property implies that there is a reduced expression $s_\beta s_\alpha = s_{i_1} \cdots \widehat{s_{i_l}} \cdots s_{i_m}$ for some $1 \leq l \leq m$. Moreover, since $s_\beta$ is a reflection, $\ell(s_\beta) = m$ is odd, and we may choose the reduced expression for $s_\beta$ to be palindromic by \cite[Lem.~4.1]{BFP}. For $l=(m+1)/2$, the resulting expression for $s_{\beta}s_{\alpha}$ is trivial, and the hypothesis $s_\beta s_\alpha \lessdot s_\beta$ is not satisfied. Thus for symmetry reasons, it is enough to consider the cases where $l> (m+1)/2$. In this situation, we have $s_{\alpha}=s_{i_m}\cdots s_{i_l} \cdots s_{i_m}$, and $\ell(s_{\alpha})<\ell(s_{\beta})-1=\ell(s_{\beta}s_{\alpha})=\ell(s_{\alpha}s_{\beta})$ . By \cite[Prop.\ 4.4.6]{BB}, this inequality implies that $s_{\alpha}(\beta)>0$. By the same proposition, $\ell(s_{\beta}s_{\alpha}) < \ell(s_{\beta})$ implies that $s_{\beta}(\alpha)=\alpha-\langle \alpha,\beta^{\vee}\rangle\beta<0$. Since $\alpha$ and $\beta$ are positive, $\langle \alpha, \beta^{\vee}\rangle$ also has to be positive. Therefore, 
\begin{equation}\label{eqc'}
(s_{\alpha}(\beta))^{\vee}=s_{\alpha^{\vee}}(\beta^{\vee})=\beta^{\vee}-c'\alpha^{\vee}
\end{equation} for some integral $c'>0$.

Now, recalling that $l> (m+1)/2$, we will prove that we may choose $\gamma_1=\alpha$ and $\gamma_2=s_{\alpha}(\beta)$ to satisfy the conclusion of the lemma.  Certainly, $s_\beta s_\alpha = s_\alpha (s_\alpha s_\beta s_\alpha) = s_{\gamma_1}s_{\gamma_2}$.  We next show that this product is length-additive. Since $\ell(s_{\beta})-1 =\ell(s_{\beta}s_{\alpha}) =  \ell(s_{s_{\alpha}(\beta)}s_{\alpha}) \leq \ell(s_{s_{\alpha}(\beta)})+\ell(s_{\alpha})$, length-additivity is implied by the following claim.
\vskip 3pt
{\it Claim.} $\ell(s_{s_{\alpha}(\beta)})+\ell(s_{\alpha})\leq \ell(s_{\beta})-1.$
\vskip 3pt
For every positive root $\gamma$ we have $\ell(s_{\gamma})\leq\langle 2\rho,\gamma^{\vee}\rangle-1$ by \cite[Lem.~4.3]{BFP}, and we assumed equality for $\gamma=\beta$.  Then we have
\begin{eqnarray}\label{ineqs}
\ell(s_{s_{\alpha}(\beta)})&\leq & \langle 2\rho, (s_{\alpha}(\beta))^{\vee}\rangle-1 \nonumber\\
&\overset{\eqref{eqc'}}{=}&\langle 2\rho, \beta^{\vee}-c'\alpha^{\vee}\rangle-1\\
&\leq& \langle 2\rho, \beta^{\vee}-\alpha^{\vee}\rangle-1 \nonumber\\
&\leq&\ell(s_{\beta})-\ell(s_{\alpha})-1, \nonumber
\end{eqnarray}
which proves the claim. 

Furthermore, since both sides of the inequality in the claim are equal, each of the inequalities in \eqref{ineqs} also has to be an equality. 
From the first line of \eqref{ineqs}, we see that $\ell(s_{s_{\alpha}(\beta)})= \langle 2\rho, (s_{\alpha}(\beta))^{\vee}\rangle-1$.  Finally, since $\ell(s_{\beta})=\langle 2\rho,\beta^{\vee}\rangle-1$ by hypothesis, the last equality in \eqref{ineqs} yields $\ell(s_{\alpha})=\langle 2\rho,\alpha^{\vee}\rangle-1$, which completes the proof.
\end{proof}

Lemma \ref{refprod} comprises the technical heart of the proof of Lemma \ref{basecase}, which says that using an upward edge does not ultimately provide savings on the number of edges required to go from an element $w$ down to the identity in $\Gamma_G$. 

\begin{lemma}\label{basecase}
Let $w \in W$, and suppose that $w \lessdot ws_\alpha$ for some $\alpha \in \Phi^+$.  Then
\begin{equation*}
d_{\downarrow}(w) \leq d_{\downarrow}(ws_\alpha) + 1.
\end{equation*}
\end{lemma}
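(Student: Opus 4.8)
Write $u = ws_\alpha$, so that $w \lessdot u$, i.e.\ $\ell(w) = \ell(u)-1$ and $w \leq u$ in Bruhat order. Fix a downward path $p$ in $\Gamma_G$ from $u$ to $1$ realizing $d_{\downarrow}(u)$, and let $s_\gamma$ be the label on the first edge $u \longrightarrow us_\gamma$ of $p$. We want to build a downward path from $w$ to $1$ using at most $d_\downarrow(u)+1$ edges. The natural idea is to compare $w$ and $us_\gamma$: both are obtained from $u$ by right multiplication by a reflection, and $w \lessdot u$ while $us_\gamma$ sits far below $u$ (since $\ell(us_\gamma) = \ell(u) - \langle 2\rho,\gamma^\vee\rangle + 1$). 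First I would dispose of the trivial case $w = us_\gamma$, where the tail of $p$ already gives a downward path from $w$ to $1$ with $d_\downarrow(u)-1$ edges, so the bound holds with room to spare. So assume $w \neq us_\gamma$.

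\textbf{Reduction to a product of two reflections with small height.} The element $ws_\gamma$ (equivalently $s_\alpha u s_\gamma$, up to rewriting) relates $w$ and $us_\gamma$; concretely, $w^{-1}(us_\gamma) = s_\alpha s_\gamma$, a product of two reflections, which has reflection length at most $2$. Since the first edge of $p$ is downward, $\ell(s_\gamma) = \langle 2\rho, \gamma^\vee\rangle - 1$, so $s_\gamma$ is a reflection of the special ``small height'' type appearing in Lemma \ref{refprod}, and moreover $u s_\gamma \lessdot u$ is false in general but $us_\gamma \leq u$ and one checks $s_\gamma$ covers... — here I would instead invoke Lemma \ref{refprod} directly in the form it is stated: the hypothesis there is exactly that $s_\beta$ is a small-height non-simple reflection with $s_\beta s_\alpha \lessdot s_\beta$, and the conclusion produces a length-additive factorization $s_\beta s_\alpha = s_{\gamma_1}s_{\gamma_2}$ into two small-height reflections. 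I expect to apply this with the roles arranged so that the ``extra'' upward step $w \lessdot u$ gets absorbed into a two-reflection detour, each of whose reflections corresponds to a genuine downward edge in $\Gamma_G$. Thus from $w$ one takes $s_{\gamma_1}$, then $s_{\gamma_2}$ (two downward edges, since each $s_{\gamma_i}$ has small height) to reach $us_\gamma$, and then follows the tail of $p$ from $us_\gamma$ to $1$ (which has $d_\downarrow(u)-1$ edges). Total: $2 + (d_\downarrow(u)-1) = d_\downarrow(u)+1$, as desired.

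\textbf{The main obstacle.} The delicate point is verifying that the hypotheses of Lemma \ref{refprod} are actually met after the correct identification of $\beta, \alpha$, and in particular that the two reflections produced genuinely realize downward edges of $\Gamma_G$ from the appropriate vertices — i.e.\ that the length drops occur in the right order so that $w \to ws_{\gamma_1} \to ws_{\gamma_1}s_{\gamma_2} = us_\gamma$ is a valid downward path, not merely that the reflections have small height abstractly. One must check that $\ell(ws_{\gamma_1}) = \ell(w) - \langle 2\rho, \gamma_1^\vee\rangle + 1$ and similarly for the second step; this is where length-additivity of the factorization from Lemma \ref{refprod}, combined with $\ell(s_{\gamma_i}) = \langle 2\rho,\gamma_i^\vee\rangle -1$, is used, but it requires care about which element the reflections act on. A secondary subtlety is handling the case distinction when $w$ is itself lower than $us_\gamma$ versus incomparable, and making sure the argument covers $\ell(ws_\gamma)$ in either configuration; I would handle this by working with $s_\alpha s_\gamma$ as an abstract product of reflections and letting Lemma \ref{refprod} (applied to a suitable palindromic reduced expression) do the combinatorial bookkeeping, rather than arguing geometrically with the roots each time. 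Once the factorization is in hand, concatenating paths and counting edges is routine.
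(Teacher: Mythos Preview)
Your plan has a genuine gap: you try to reconnect $w$ to the tail of the minimal downward path from $u=ws_\alpha$ after only its \emph{first} edge, but there is no reason the two-step detour $w \to \cdot \to us_\gamma$ should exist as a downward path. Concretely, take $W=S_4$, $u=s_1s_2s_1s_3$, and the minimal downward path $u \to us_{\alpha_3}=s_1s_2s_1 \to 1$ (so $d_\downarrow(u)=2$, with first edge labeled $\gamma=\alpha_3$). Choose the cocover $w=s_1s_2s_3 \lessdot u$ (here $\alpha=\alpha_1$). Then $us_\gamma=s_1s_2s_1$ has $\ell(us_\gamma)=3=\ell(w)$, so no downward path of any length from $w$ to $us_\gamma$ can exist, let alone one of length two. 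The obstruction is structural: the covering relation $w\lessdot u$ corresponds to deleting a simple reflection from a reduced word for $u$, and that deletion need not occur inside the reflection $s_\gamma$ labeling the first edge of your chosen path --- it may lie in a later factor. Your attempt to invoke Lemma~\ref{refprod} never verifies its hypothesis $s_\beta s_\alpha \lessdot s_\beta$ for any specific $\beta$; in the example above, with $\beta=\gamma=\alpha_3$ simple, the lemma does not even apply.

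The paper's argument avoids this by working with the \emph{entire} length-additive factorization $u=s_{\beta_1}\cdots s_{\beta_r}$ coming from the minimal downward path (so $r=d_\downarrow(u)$ and each $\ell(s_{\beta_i})=\langle 2\rho,\beta_i^\vee\rangle-1$). By the Strong Exchange Property, the cocover $w\lessdot u$ deletes one simple reflection from a reduced word for $u$, and that letter lies inside some factor $s_{\beta_j}$ --- not necessarily the first. This gives $s_{\beta_j}s_\gamma \lessdot s_{\beta_j}$ for a suitable $\gamma$, which is exactly the hypothesis of Lemma~\ref{refprod}; the lemma then replaces $s_{\beta_j}$ by a length-additive product $s_{\gamma_1}s_{\gamma_2}$ of small-height reflections, yielding a length-additive expression $w=s_{\beta_1}\cdots s_{\beta_{j-1}}s_{\gamma_1}s_{\gamma_2}s_{\beta_{j+1}}\cdots s_{\beta_r}$ with $r+1$ small-height factors and hence a downward path of length $d_\downarrow(u)+1$. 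In the $S_4$ example the deletion lands in $s_{\beta_1}=s_1s_2s_1$, and one obtains $w=s_1\cdot s_2\cdot s_3$ with $d_\downarrow(w)\le 3$. The moral is that you must let the Exchange Property tell you \emph{where} along the path to perform the surgery, rather than fixing the location in advance.
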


\begin{proof}
Let $w \in W$, and suppose that $w \lessdot ws_\alpha$ for some $\alpha \in \Phi^+$.  Consider any path in $\Gamma_G$ realizing $d_{\downarrow}(ws_\alpha)$, which then corresponds to a length-additive expression as a product of reflections of the form $ws_\alpha = s_{\beta_1} \cdots s_{\beta_r}$, where each of the reflections satisfies $\ell(s_{\beta_i}) = \langle 2\rho, \beta_i^\vee \rangle -1$.  

On the other hand, since $w \lessdot ws_\alpha$ is a cocover, then for any reduced expression $ws_\alpha = s_{i_1} \cdots s_{i_k}$, we have  $w = s_{i_1} \cdots \widehat{s_{i_\ell}} \cdots s_{i_k}$ for some $1 \leq \ell \leq k$ by the Strong Exchange Property.  Further, since $\ell(w) = \ell(ws_\alpha)-1$, then the expression $w = s_{i_1} \cdots \widehat{s_{i_\ell}} \cdots s_{i_k}$ is still reduced. Therefore, $w$ has a reduced expression of the form $w = s_{\beta_1} \cdots (s_{j_1}\cdots \widehat{s_{j_p}} \cdots s_{j_m}) \cdots s_{\beta_r}$, where $s_{i_\ell} = s_{j_p}$ is the single factor removed from the reflection $s_{\beta_j} =  s_{j_1}\cdots  s_{j_m}$.  Since the entire expression for $w$ remains reduced when removing $s_{j_p}$, then the expression $s_{j_1}\cdots \widehat{s_{j_p}} \cdots s_{j_m}$ is also reduced.  Defining $s_\gamma = s_{j_m} \cdots s_{j_p}\cdots s_{j_m}$, we then see that $s_{\beta_j}s_\gamma \lessdot s_{\beta_j}$, and the hypotheses of Lemma \ref{refprod} are satisfied.  Therefore, we may write $s_{\beta_j}s_\gamma =  s_{\gamma_1}s_{\gamma_2}$, where  $\ell(s_{\beta_j}s_\gamma) = \ell(s_{\gamma_1}) + \ell(s_{\gamma_2})$ and $\ell(s_{\gamma_i}) = \langle 2\rho, \gamma^\vee_i \rangle -1$.

Altogether, we have thus shown that we have a length-additive expression for $w$ as a product of reflections of the form $w = s_{\beta_1} \cdots s_{\beta_{j-1}}s_{\gamma_1}s_{\gamma_2}s_{\beta_{j+1}} \cdots s_{\beta_r}$, where each of the reflections in the product satisfies the criterion for drawing a downward edge in $\Gamma_G$.  Therefore, this expression corresponds to a downward path of length $r+1 = d_{\downarrow}(ws_\alpha) +1$ from $w$ to 1 in $\Gamma_G$, and so $d_{\downarrow}(w) \leq d_{\downarrow}(ws_\alpha) + 1.$
\end{proof}

Lemma \ref{basecase} provides the foundation for the proof of Proposition \ref{mindown}, which allows us to trade paths from $w$ to the identity containing upward edges for a path of the same length that uses exclusively downward edges.

\begin{prop}\label{mindown}
Let $w \in W$.  Then $d_\Gamma(w,1) = d_{\downarrow}(w).$
\end{prop}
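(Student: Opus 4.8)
The plan is to prove the two inequalities separately. The inequality $d_\Gamma(w,1) \leq d_\downarrow(w)$ is immediate, since any downward path from $w$ to $1$ realizing $d_\downarrow(w)$ is in particular a path from $w$ to $1$ in $\Gamma_G$. The substance is the reverse inequality $d_\downarrow(w) \leq d_\Gamma(w,1)$: given a minimal-length path from $w$ to $1$ in $\Gamma_G$, which may use both upward and downward edges, we must produce an all-downward path of the same or smaller length. I would argue by induction on the number of upward edges appearing in a fixed minimal-length path $p$ from $w$ to $1$. If $p$ has no upward edges, then $p$ is itself an all-downward path, so $d_\downarrow(w) \leq \ell(p) = d_\Gamma(w,1)$ and we are done.

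For the inductive step, suppose $p$ uses at least one upward edge. The idea is to locate an upward edge that is, in an appropriate sense, "last" — more precisely, I would walk along $p$ from $w$ and consider the first upward edge encountered. Say $p$ passes through a vertex $w'$ and then takes an upward edge $w' \lessdot w' s_\alpha$. The portion of $p$ from $w' s_\alpha$ onward to $1$ is a path of length $d_\Gamma(w,1) - d_\Gamma(w, w') - 1$; I want to replace the initial segment from $w$ to $w'$ (all downward, by the choice of $w'$) together with this one upward edge by something better. Applying Lemma \ref{basecase} to the element $w'$, with its upward edge $w' \lessdot w' s_\alpha$, gives $d_\downarrow(w') \leq d_\downarrow(w' s_\alpha) + 1$. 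The point is then to assemble a new path from $w$ to $1$: follow the all-downward initial segment of $p$ from $w$ down to $w'$; then use an all-downward path from $w'$ to $1$ realizing $d_\downarrow(w')$ instead of continuing along $p$. This produces an all-downward path from $w$ to $1$, and I must check its length is at most $\ell(p)$. Here one needs that the tail of $p$ from $w' s_\alpha$ to $1$ has length at least $d_\downarrow(w's_\alpha)$, which follows because $d_\Gamma(w's_\alpha, 1) \leq d_\downarrow(w's_\alpha)$... wait, that is the wrong direction, so I would instead invoke the inductive hypothesis: the tail of $p$ from $w's_\alpha$ to $1$ has strictly fewer upward edges than $p$ (we have consumed the first upward edge), so by induction its length is at least $d_\downarrow(w's_\alpha)$. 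Combining, the length of the initial downward segment ($= d_\Gamma(w,w')$, which equals $\ell(w) - \ell(w')$) plus $d_\downarrow(w') \leq d_\downarrow(w's_\alpha) + 1 \leq \ell(\text{tail of } p) + 1 = \ell(p) - d_\Gamma(w,w')$, so the new all-downward path has length at most $\ell(p) = d_\Gamma(w,1)$, giving $d_\downarrow(w) \leq d_\Gamma(w,1)$.

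I expect the main obstacle to be setting up the induction cleanly so that Lemma \ref{basecase} applies verbatim. Lemma \ref{basecase} requires the hypothesis $w' \lessdot w' s_\alpha$, i.e.\ that the upward edge in $\Gamma_G$ is genuinely a covering relation in Bruhat order increasing length by exactly one — but this is precisely the definition of an upward (blue) edge in $\Gamma_G$, so the hypothesis is automatic. The more delicate point is that after replacing a suffix of the path by an all-downward path realizing $d_\downarrow(w')$, the concatenation with the all-downward prefix of $p$ is still a legitimate path in $\Gamma_G$ — this is clear since concatenation of paths is always a path — and that the induction parameter (number of upward edges) genuinely decreases. One must be slightly careful that "the first upward edge" is well-defined and that the prefix before it is all-downward by construction; with that in hand the bookkeeping in the previous paragraph goes through. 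An alternative, perhaps cleaner, formulation avoids choosing a minimal path and instead inducts directly on $\ell(w)$ or on $d_\Gamma(w,1)$, peeling off one edge of a minimal path at a time and using Lemma \ref{basecase} only when that edge is upward; I would likely present whichever of these two inductions produces the least friction with the precise statement of Lemma \ref{basecase}.
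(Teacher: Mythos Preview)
Your approach is the same as the paper's in spirit---both eliminate upward edges by appealing to Lemma \ref{basecase}---but your induction as written has a gap, and the paper's variant avoids it by a small change of viewpoint.

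The problem is your choice of the \emph{first} upward edge. You then want the inductive hypothesis to give $\ell(\text{tail from }w's_\alpha\text{ to }1)\geq d_\downarrow(w's_\alpha)$. But that tail need not be a minimal-length path from $w's_\alpha$ to $1$, so your hypothesis (phrased for ``a fixed minimal-length path'') does not apply to it; and replacing the tail by a genuine minimal path from $w's_\alpha$ to $1$ gives you no control over how many upward edges \emph{that} path has, so the induction parameter is not known to drop. The paper instead argues by contradiction on the minimal number $m$ of upward edges among all paths realizing $d_\Gamma(w,1)$ and looks at the \emph{last} upward edge $u_m\to u_m s_{\beta_m}$. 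The point of choosing the last one is that the remaining segment from $u_m s_{\beta_m}$ to $1$ is then all-downward by construction, so its length is $\geq d_\downarrow(u_m s_{\beta_m})$ immediately; together with Lemma \ref{basecase} this replaces the subpath from $u_m$ onward by an all-downward path of no greater length, producing a minimal-length path with only $m-1$ upward edges. If you prefer to keep the first-upward-edge version, the clean fix is to strengthen the induction hypothesis to ``for every $v\in W$, every path from $v$ to $1$ with at most $k$ upward edges has length $\geq d_\downarrow(v)$''; then the tail is covered verbatim.

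One incidental correction: your parenthetical ``$= d_\Gamma(w,w')$, which equals $\ell(w)-\ell(w')$'' is false on both counts---the initial segment of $p$ need not be a shortest path from $w$ to $w'$, and downward edges in $\Gamma_G$ can decrease length by more than $1$. Fortunately you never actually use either identity; if you just call the length of the initial segment $L_1$, then the new all-downward path has length $L_1+d_\downarrow(w')\leq L_1 + 1 + \ell(\text{tail}) = \ell(p)$, which is all that is needed.
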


\begin{proof}
Define $m$ to be the minimal number of upward edges contained in any path in $\Gamma_G$ realizing $d_\Gamma(w,1)$. We have to prove that $m=0$. Assume that $m \geq 1$, and let $p$ be such a path. Denote the upward edges in $p$ by $u_i \longrightarrow u_is_{\beta_i}$ encountered in the order $i = 1, \dots, m$ as we travel along the path. Consider the subpath of $p$ which starts at $u_m$. Since the edge $u_m \longrightarrow u_ms_{\beta_m}$ is upward, then the length only increases by one and $u_m\lessdot u_ms_{\beta_m}$. Lemma \ref{basecase} then says that $d_{\downarrow}(u_m) \leq d_{\downarrow}(u_ms_{\beta_m})+1$.  Therefore, the subpath of $p$ beginning at $u_m$, which continues upward to $u_ms_{\beta_m}$, contains at least as many edges as any path realizing $d_{\downarrow}(u_m)$.  Define a new path $p_m$ in $\Gamma_G$ from $w$ to 1 by following the original path $p$ until the vertex $u_m$, after which we follow any path down to $1$ realizing $d_{\downarrow}(u_m)$. By Lemma \ref{basecase} and the fact that $p$ realizes $d_\Gamma(w,1)$, the length of the path $p_m$ also equals $d_\Gamma(w,1)$. However, the path $p_m$ has $m-1$ upward edges, contradicting the minimality of $m$ and proving that indeed $m=0$.
\end{proof}

We are now prepared to complete the proof of Theorem \ref{T:MainEx}.

\begin{proof}[Proof of Theorem \ref{T:MainEx}]
Recall that part (a) was already proved in Example \ref{exantidom}, and so it remains only to prove parts (b) and (c).  Let $x = t^{v\lambda}w \in \widetilde{W}$, and suppose that $\lambda$ is superregular in the sense of Proposition \ref{etaminpath}, and thus also Theorem \ref{T:MainEx}.  Then $x$ is cordial if and only if $d_\Gamma(w^{-1}v,v) = \ell(v^{-1}wv)$.

(b) We first prove that if $\eta(x)= v^{-1}wv$ is a standard parabolic Coxeter element, then $x$ is cordial.  Consider any path which realizes $d_\Gamma(w^{-1}v,v)=m,$ say  
\begin{equation*}
w^{-1}v \longrightarrow w^{-1}vs_{\beta_1} \longrightarrow w^{-1}vs_{\beta_1}s_{\beta_2} \longrightarrow \cdots \longrightarrow w^{-1}vs_{\beta_1} \cdots s_{\beta_m} = v.
\end{equation*}
Note that $v^{-1}wv = s_{\beta_1}\cdots s_{\beta_m}$ so this path corresponds to an expression for $\eta(x)$ as a product of $m$ reflections.  By definition, $\ell_R(\eta(x)) \leq m$, but since $\eta(x)$ is standard parabolic Coxeter, by \cite[Lem.~2.1]{BDSW} we have
\begin{equation*}
\ell(\eta(x)) = \ell_R(\eta(x)) \leq m = d_\Gamma(w^{-1}v,v).
\end{equation*}
The opposite inequality follows from Lemma \ref{ineqrmk}.  Therefore, if $\eta(x)$ is a standard parabolic Coxeter element, we see that $d_\Gamma(w^{-1}v,v) =\ell(\eta(x))= \ell(v^{-1}wv)$, and so $x$ is cordial by Proposition \ref{etaminpath}.

(c) We now prove that if  $x$ is in the dominant Weyl chamber, then $x$ is cordial if and only if $\eta(x)=w$ is \sha.  Since $v=1$ when $x$ is dominant, by Proposition \ref{etaminpath}, we aim to prove that $d_{\Gamma}(w^{-1}, 1) = \ell(w)$ if and only if $w$ is \sha.  Note that $w$ is \sha~if and only if $w^{-1}$ is \sha, and recall that $\ell(w)=\ell(w^{-1})$.  Therefore, in fact it suffices to prove that $d_\Gamma(w,1) = \ell(w)$ if and only if $w$ is \sha. 

First suppose that $w$ is small-height-containing.  By definition, there exists an expression for $w$ of the form $w = us_\beta v$, where $\ell(w) = \ell(u)+\ell(s_\beta)+\ell(v)$, for some $u, v \in W$ and $s_\beta$ some non-simple reflection such that $\ell(s_\beta) = \langle 2\rho, \beta^\vee \rangle-1$.  Taking any reduced expressions for $u$ and $v$, say $u = s_{i_1}\cdots s_{i_k}$ and $v = s_{j_1}\cdots s_{j_\ell}$, we can construct the following path in $\Gamma_G$
\begin{equation*}
w \xrightarrow{\alpha_{j_\ell}} ws_{j_\ell} \xrightarrow{\alpha_{j_{\ell-1}}} \cdots \xrightarrow{\alpha_{j_1}} w s_{j_\ell} \cdots s_{j_1} \xrightarrow{\ \beta\ } w s_{j_\ell} \cdots s_{j_1}s_\beta \xrightarrow{\alpha_{i_k}} \cdots \xrightarrow{\alpha_{i_1}} 1.
\end{equation*}
Each edge exists because length is additive in the expression $w=us_\beta v$, which means that at each step in this path the length drops by precisely $1 = \ell(s_{i_m}) = \ell(s_{j_n})$ or $\ell(s_\beta) = \langle 2\rho, \beta^\vee \rangle-1$, as required for a downward edge in $\Gamma_G$.  Since $s_\beta$ is non-simple, then $\ell(s_\beta) \geq 3$, which means that the length of this particular path is at most $\ell(w)-2$.  Therefore, $d_\Gamma(w,1) \leq \ell(w)-2 < \ell(w)$ in this case, and so $x$ is not cordial by Proposition \ref{etaminpath}.

Conversely, assume that $w$ is \sha. We aim to show that $d_\Gamma(w,1) = \ell(w)$. Recall Proposition \ref{mindown}, which says that $d_\Gamma(w,1) = d_{\downarrow}(w),$ and so there exists a path $p$ consisting of all downward edges which also minimizes length among all paths from $w$ to 1.  By the definition of the downward edges in $\Gamma_G$, this path corresponds to an expression $w = s_{\beta_1}\cdots s_{\beta_r}$ such that the length decreases by exactly $\langle 2\rho, \beta_i^\vee \rangle -1$ for all $1 \leq i \leq r$ when right multiplying $w$ by $s_{\beta_r}, \dots, s_{\beta_1}$ in  order. Note, however, that length cannot decrease by more than $\ell(s_{\beta_i})$ when right multiplying by $s_{\beta_i}$.  On the other hand, we always have $\ell(s_{\beta_i}) \leq \langle 2\rho, \beta_i^\vee \rangle -1$, and so in fact $\ell(s_{\beta_i}) = \langle 2\rho, \beta_i^\vee \rangle -1$ for all $1 \leq i \leq r$.  Therefore, the expression $w = s_{\beta_1}\cdots s_{\beta_r}$ is also length-additive. By definition of \sha, $w$ cannot contain any non-simple reflection $s_\beta$ such that $\ell(s_\beta) = \langle 2\rho, \beta^\vee \rangle-1$.  This means that each reflection in the expression $w = s_{\beta_1}\cdots s_{\beta_r}$ must in fact be simple, and so $\ell(w) = d_{\downarrow}(w) = d_\Gamma(w,1)$.  The element $x$ is thus cordial by Proposition \ref{etaminpath}.
\end{proof}

\begin{ex}\label{SL3Ex} 
For $G={\rm SL}_3$, the Newton stratification of each double coset $IxI$ has been computed in  \cite{BeThesis}. Note, however, that our description below corrects an error in the tables at the end of loc.\ cit. In ${\rm SL}_3$, all Newton strata are equidimensional, and the closure of any Newton stratum $[b]\cap IxI\neq \emptyset$ in $IxI$ is equal to the union of all $[b']\cap IxI$ such that $[b']\in B(G)_x$ and $[b']\leq [b]$. Write $x=t^{v\lambda}w$, and first assume that $v=1$,~i.e. $x$ is in the dominant Weyl chamber, and that $\lambda=(\lambda_1,\lambda_2,\lambda_3)$ with $|\lambda_i-\lambda_{i+1}|\neq 1$. Then $x$ is non-cordial if and only if $w=w_0$. Thus in this case, we obtain exactly the condition of Theorem \ref{T:MainEx} (b) or equivalently (c), but under a much weaker superregularity assumption on $\lambda$. Furthermore, all non-cordial elements (even without any regularity assumption) are of the form $x\omega$ for some non-cordial $x$ in the dominant Weyl chamber and $\omega$ normalizing $I$. For $x$ outside the dominant Weyl chamber with $v \in \{s_1, s_2, w_0\}$, there exist cordial elements which are not covered by Theorem \ref{T:MainEx} applied directly to $x$ or to $x\omega$ for any $\omega$ normalizing $I$. 
\end{ex}


\bibliographystyle{alphanum}
\bibliography{references}

\end{document}